\newtheorem{proposition}{Proposition}[section]
\newtheorem{theorem}[proposition]{Theorem}
\newtheorem{corollary}[proposition]{Corollary}
\newtheorem{lemma}[proposition]{Lemma}
\theoremstyle{definition}
\newtheorem{definition}[proposition]{Definition}
\newtheorem{remark}[proposition]{Remark}
\numberwithin{equation}{section}
\def\Dx{\Delta_x}
\def\({\left(}
\def\){\right)}
\def\Nx{\nabla_x}
\def\Dt{\partial_t}
\def\divv{\operatorname{div}}
\newcommand{\rot}{\mathop\mathrm{curl}}
\begin{document}

\title[Damped Navier-Stokes equations in $\mathbb R^2$]{Upper bounds for the attractor dimension of damped Navier-Stokes equations in $\mathbb R^2$}
\author[A. Ilyin, K. Patni and S. Zelik]{Alexei Ilyin${}^1$,
Kavita Patni ${}^2$ and Sergey Zelik${}^{1,2}$}

\begin{abstract}
We consider finite energy solutions for the damped and driven two-dimensional
 Navier--Stokes equations in the plane and show that
 the corresponding dynamical system possesses a global attractor.
 We obtain upper bounds for its fractal dimension when the forcing term
 belongs to  the
whole scale of homogeneous Sobolev spaces from $-1$ to $1$.
\end{abstract}

\subjclass[2000]{35B40, 35B45}
\keywords{Damped Navier-Stokes equations, attractors, unbounded domains, box-counting dimension}
\address{\newline
${}^1$ Keldysh Institute of Applied Mathematics, \newline
 Moscow 125047, Russia\newline
\ \ ${}^2$ University of Surrey, Department of Mathematics, \newline
Guildford, GU2 7XH, United Kingdom.}
\email{ilyin@keldysh.ru}
\email{k.patni@surrey.ac.uk}
\email{s.zelik@surrey.ac.uk}

\maketitle

\section{Introduction}\label{s1}

The theory of global attractors for the 2-D Navier--Stokes
system
$$
\begin{cases}
\Dt u+(u,\Nx)u+\Nx p=\nu\Dx u+g,\\
u\big|_{t=0}=u_0,\ \ \divv u=0.
\end{cases}
$$
has been a starting point of the theory of infinite
dimensional dissipative dynamical systems and remains in
the focus of this theory, see~\cite{BV, Ch-V-book,CF88, FMRT,Lad,temam,S-Y}
and the references therein.

In the case of a bounded domain $\Omega$  the corresponding dynamical system
has a global attractor in the appropriate phase space. The attractor
has finite fractal dimension, measured in terms of the  dimensionless
number $G$ (the Grashof number), $G=\frac{\|g\||\Omega|}{\nu^2}$.

The best known estimate in the case of the Dirichlet boundary conditions
$u\vert_{\partial\Omega}=0$ is (see~\cite{temam})
\begin{equation}\label{0.est-dir}
\dim_f\mathcal{A}\le c_{\mathrm{D}}G,
\end{equation}
while in the case of a periodic domain $x\in[0,2\pi L]^2$
the estimate can be significantly improved (see \cite{CFT}):
$$
\dim_f\mathcal{A}\le c_{\mathrm{per}}G^{2/3}(\ln(1+G))^{1/3},
$$
and, moreover, this estimate  sharp up to a logarithmic correction as
shown in \cite{Liu}, see also \cite{DG} for the alternative proof of the upper bound.

Finding explicit majorants for the constants $c_{\mathrm{D}}$ and
$c_{\mathrm{per}}$ amounts to finding sharp or explicit constants
in certain Sobolev inequalities and spectral Lieb--Thirring
inequalities. For example, $c_{\mathrm{D}}\le (4\pi 3^{1/4})^{-1}$
\cite{I-Stokes}, and the majorant for $c_{\mathrm{per}}$ can
be easily written down using the recent result \cite{BDZ} on the
  sharp constant in the logarithmic
Brezis--Gallouet inequality (which is essential for the attractor
dimension estimate in the periodic case).

In unbounded channel-like domains the Navier-Stokes system with Di\-richlet boundary conditions
is still dissipative in view of the Poincar\'e inequality. In particular, if the case of finite 
energy solutions is considered, the associated semigroup possesses a compact global attractor 
similarly to the case of bounded domains. Up to the moment, there are two alternative ways to 
establish this fact. The first one is based on the weighted energy estimates and careful analysis 
of the Leray projection in weighted Sobolev spaces, see \cite{Bab,MZ} and references therein 
and the second one utilizes the so-called energy method (which will be also used in our paper) 
and the energy equality, see \cite{ball,Rosa,temam}.
\par
 However, in contrast to the case of bounded domains,
the solution semigroup is no longer compact, but is  rather
asymptotically compact and this fact strongly affects the existing upper bounds for 
the dimension of the attractor. Indeed, the best known estimate for the case of 
channel-like domains
obtained in \cite{Rosa} can be written as follows
$$
\dim_f\mathcal A\le \frac{c_\mathrm{LT}}2\frac{\|g\|^2} {\lambda_1^{2}\nu^{4}}.
$$
Here $\lambda_1>0$ is the bottom of the spectrum of the Stokes operator in the channel,
and ${c_\mathrm{LT}}$ is the universal  Lieb--Thirring constant, see \eqref{L-T}.
For example, in a straight channel of width $d$ we have $\lambda_1\ge \pi^2/d^2$,
so that in this case we obtain
$$
\dim_f\mathcal A\le \frac{1}{4\sqrt{3}\pi^4}\frac{d^4\|g\|^2} {\nu^{4}}.
$$
We observe that these estimates are proportional to $\nu^{-4}$ (unlike
 \eqref{0.est-dir} which is proportional to $\nu^{-2}$). On the other hand, it is worth 
 mentioning that, to the best of our knowledge, no growing as $\nu\to0$ lower bounds for the  
 dimension of the attractor are known for the case of Dirichlet boundary conditions, 
 regardless whether the underlying domain is bounded or unbounded. So, in contrast to 
 the case of periodic boundary conditions,  the behaviour of the attractor's dimension 
 as $\nu\to0$ remains  unclear for the case of Dirichlet boundary conditions even in 
 the case of bounded domains.

We mention also that keeping in mind the Poiseulle flows and the mean flux integral, 
it seems more natural to consider the {\it infinite} energy solutions for the 
Navier-Stokes system in a pipe. In this case, the system remains dissipative 
and the existence of the so-called locally compact attractor can be established, 
see \cite{AZ, Zel-glas}. The dimension of this attractor may be infinite in 
general, but will be finite if the external forces $g(x)$ decay to zero as 
$|x|\to\infty$ (no matter how slow this decay is), see \cite{MZ,Zel-glas} and the
 references therein.

In the whole $\mathbb{R}^2$ the Laplacian is not positive-definite
and the Navier--Stokes system is not dissipative at least in a usual sense 
even in the case of zero external forces and finite energy solutions, 
see e.g. \cite{shonbek}, see also \cite{Gal1,Wie}  and the references 
therein concerning the decay properties of various types of solutions 
for the Navier-Stokes problem with zero external forces. The presence 
of external forces makes the problem more complicated and usually only 
growing in time bounds for the solutions are available. We mention here 
only the recent results concerning the polynomial growth in time for 
the so-called uniformly local norms of infinite-energy solutions obtained 
in \cite{Gal, Zel-inf}, see also the references therein.
\par
 Let us now consider the  damped and driven Navier--Stokes system
\begin{equation}\label{1.ns-main}
\begin{cases}
\Dt u+(u,\Nx)u+\Nx p+\alpha u=\nu\Dx u+g,\\
u\big|_{t=0}=u_0,\ \ \divv u=0.
\end{cases}
\end{equation}
with additional dissipative term $\alpha u$.
The drag/friction term $\alpha u$, where $\alpha>0$
is the Rayleigh or  Ekman friction coefficient (or the Ekman
 pumping/dissipation constant),
 models the bottom friction in
two-dimensional oceanic models
and is the main energy sink in large scale atmospheric models~\cite{P}.
\par
The analytic properties of system \eqref{1.ns-main} (such as existence of solutions, their uniqueness and regularity, etc.) remain very close to the analogous properties of the classical Navier-Stokes equations or Euler equations if the inviscid case $\nu=0$ is considered. However,  the friction term
$\alpha u$ is very essential for the dynamics since it removes the energy which piles up at the large scales and from the mathematical point of view compensates the lack of the Poincare inequality. This
 makes the Navier--Stokes system and even the limit Euler system dissipative whatever
the domain is and allows to study its global attractors in various phase spaces. For instance, the so-called weak global attractor for the inviscid case is constructed in~\cite{I91} for the case of finite energy solutions; its compactness in the strong topology related with the $H^1$-norm is verified in \cite{CVZ} and \cite{CZ} for the cases of finite and infinite energy solutions respectively; the inviscid limit $\nu\to0$ is studied in \cite{const} including the absence of the so-called anomalous dissipation of enstrophy; the existence of a locally compact global attractor in the uniformly local phase spaces is established for the viscous case $\nu>0$ in \cite{Zel-inf}; see also~\cite{BCT}, \cite{W}
for the existence and uniqueness results for the stationary problem
and the
stability of stationary solutions for~\eqref{1.ns-main}
with~$\nu=0$.

From the point of view of  the attractors
and their dimension the
system~\eqref{1.ns-main}
in the case of the  periodic domain $x\in[0,2\pi L]^2$ was studied
in \cite{I-M-T},
where it was shown that the corresponding dynamical system
possesses a global attractor $\mathcal{A}$ (in $L^2$) whose fractal dimension
is finite and satisfies the following estimate
\begin{equation}\label{min}
\dim_f\mathcal{A}\le \min
\left(\sqrt{6}\,
\frac{\|\rot g\|L}{\nu\alpha}\,,\
\frac38\,\frac{\|\rot g\|^2}{\nu\alpha^3}\right),
\end{equation}
where
the values of the constants are updated in accordance with
\cite{I12JST}. The first estimate
is enforced in the regime $\nu/L^2\gg\alpha$, while the second
 estimate
is enforced in the opposite regime $\nu/L^2\ll\alpha$. We observe that both
estimates are of the order $1/\nu$ as $\nu\to0^+$  if all the remaining parameters
are fixed. It was also shown in \cite{I-M-T} that this rate
of growth of the dimension is sharp, and the upper bounds were
supplemented with a lower bound of the order $1/\nu$, based on
the instability analysis of generalized Kolmogorov flows. The finite-dimensionality of the global attractor in the uniformly local phase spaces under the assumption that the external forces $g(x)$ decay to zero as $|x|\to\infty$ has been proved recently in \cite{Pen}, but no explicit upper bounds for the dimension was given there.

We would also like to point out that starting from the
paper~\cite{Lieb} the Lieb--Thirring inequalities are
an essential analytical tool  in the estimates
of global  Lyapynov exponents for the Navier--Stokes equations.
This fully applies to our case.

We now observe that the first estimate in~\eqref{min} blows up as
the size of the periodic domain $L\to \infty$. On the other hand,
the second estimate survives (the homogeneous $H^1$-norm is
scale invariant in two dimensions). Therefore, one might expect
that this estimate holds  for $L=\infty$, that is, for
$x\in\mathbb{R}^2$, and a motivation of the present work
is to show that this is indeed the case.

In this paper we study the damped and driven Navier--Stokes system
\eqref{1.ns-main} in $\mathbb{R}^2$ in the class of {\it finite} energy
solutions and our main aim is to obtain  explicit upper bounds for the attractor's dimension in terms of the parameters $\nu$ and $\alpha$ and various norms of the external forces $g$.
\par
In section~\ref{s2} we recall for the reader convenience the proof
of the well-posedness and derive the energy equality. Then
using the energy equality method \cite{rosa,Rosa} we establish the
asymptotic compactness of the solution semigroup and, hence,
the existence of the global attractor~$\mathcal{A}$.

In section~\ref{s3} we consider the case when the right-hand
side $g$ belongs to the scale of homogeneous Sobolev spaces
$\dot H^s$, $s\in[-1,1]$ and derive the following estimate(s)
for the fractal dimension of the attractor $\mathcal{A}$:
$$
\dim_f\mathcal A\le \frac{1-s^2}{64\sqrt{3}}
\(\frac{1+|s|}{1-|s|}\)^{|s|}\frac1{\alpha^{2+s}\,\nu^{2-s}}\|g\|^2_{\dot H^{s}},
\quad s\in[-1,1].
$$
In particular, for $s=1$ we obtain
$$
\dim_f\mathcal A\le \frac{1}{16\sqrt{3}}
\frac{\|\rot g\|^2}{\nu\,\alpha^{3}},
$$
which up to a constant coincides with the second estimate
in \eqref{min}, proving thereby our expectation.

In this paper we  use standard notation.
The $L_2$-norm and the corresponding scalar product
are denoted by $\|\cdot\|$ and $(\cdot,\cdot)$.

\section{A priori estimates, well-posedness and
asymptotic compactness}\label{s2}
We study the damped and driven  Navier-Stokes system \eqref{1.ns-main}
in $\mathbb R^2$.
Here $u(t,x)=(u^1,u^2)$ is the unknown velocity vector field, $p$ is
the unknown pressure, $g(x)=(g^1,g^2)$ is the given external force
(and without loss of generality we can and shall assume that
$\divv g=0$),
the advection term is
$$
(u,\Nx)v=\sum_{i=1}^2u^i\partial_{x_i}v,
$$
 and
$\alpha>0$, $\nu>0$ are given parameters. We restrict ourselves
to considering only {\it finite} energy solutions, so we assume that
$$
g\in [L^2(\mathbb R^2)]^2,\ \ u_0\in \mathcal H:=\{u_0\in[L^2(\mathbb R^2)]^2,\  \divv u_0=0\},
$$
and by definition $u=u(t,x)$ is a weak solution of
\eqref{1.ns-main} if
\begin{equation}\label{1.def}
u\in C(0,T;\mathcal H)\cap L^2(0,T; [H^1(\mathbb R^2)]^2),\ \ T>0,
\end{equation}
and \eqref{1.ns-main} is satisfied in the sense of distributions.
This means that  for every divergence free test function
$\varphi(t,x)=(\varphi^1,\varphi^2)$,
$\divv \varphi=0$, $\varphi\in C_0^\infty(R_+\times\mathbb R^2)$,
the following integral identity holds
\begin{multline}\label{1.weak}
-\int_{\mathbb R}(u,\Dt\phi)\,dt+\int_{\mathbb R}((u,\Nx)u,\varphi)\,dt+\alpha\int_{\mathbb R}(u,\varphi)\,dt+\\
+\nu\int_{\mathbb R}(\Nx u,\Nx\varphi)\,dt=\int_{\mathbb R}(g,\varphi)\,dt.
\end{multline}

The following fact concerning the global well-posedness of
the Navier-Stokes equations in 2D is well-known, see, for instance,
 \cite{temam,temam1}.

\begin{theorem}\label{Th1.main}
For any $u_0\in\mathcal H$ there exists a unique
solution $u$ of problem \eqref{1.ns-main} satisfying
\eqref{1.def} and \eqref{1.weak}. Moreover, this solution
satisfies the following energy equality for almost all $t\ge0$:
\begin{equation}\label{1.energy}
\frac12 \frac d{dt}\|u(t)\|^2+\nu\|\Nx u(t)\|^2+
\alpha\|u(t)\|^2=(g,u(t)).
\end{equation}
\end{theorem}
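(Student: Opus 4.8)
The plan is threefold: to construct a solution by a Galerkin approximation, to prove uniqueness by an energy estimate that exploits the two-dimensional structure, and finally to upgrade the resulting energy \emph{inequality} to the energy \emph{equality} \eqref{1.energy} using the regularity guaranteed by \eqref{1.def}. Throughout, the damping term $\alpha u$ is a bounded, sign-definite perturbation that only strengthens the dissipation, so the argument runs in parallel with the classical undamped $2$D theory, cf.\ \cite{temam,temam1}.

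For existence I would first apply the Leray projection $P$ onto divergence-free fields, noting that $Pg=g$ since $\divv g=0$ and that $P$ commutes with $\Dx$, so that \eqref{1.ns-main} becomes $\Dt u+P[(u,\Nx)u]+\alpha u=\nu\Dx u+g$. Running a finite-dimensional Galerkin scheme $u_m$ built on a countable basis of divergence-free $H^1$-fields and testing with $u_m$, the cancellation $((u_m,\Nx)u_m,u_m)=0$ yields
$$
\frac12\frac{d}{dt}\|u_m\|^2+\nu\|\Nx u_m\|^2+\alpha\|u_m\|^2=(g,u_m).
$$
Young's inequality $|(g,u_m)|\le\frac\alpha2\|u_m\|^2+\frac1{2\alpha}\|g\|^2$ and Gronwall's lemma then give a bound on $\|u_m(t)\|$ uniform in $t$ and $m$, and, after integration in time, a uniform bound on $\nu\int_0^T\|\Nx u_m\|^2\,dt$. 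Reading $\Dt u_m$ off the equation and using the two-dimensional bound $u_m\otimes u_m\in L^2(0,T;L^2)$ (which follows from the Ladyzhenskaya inequality, see below), one also controls $\Dt u_m$ in $L^2(0,T;H^{-1})$, again uniformly in $m$.

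The main obstacle --- and the one genuinely new feature compared with a bounded domain --- is passing to the limit in the nonlinear term, since $H^1(\mathbb R^2)\hookrightarrow L^2(\mathbb R^2)$ is \emph{not} compact. The remedy is local compactness: the embedding $H^1(\mathbb R^2)\hookrightarrow L^2_{\mathrm{loc}}(\mathbb R^2)$ is compact, so the Aubin--Lions lemma, applied on an exhausting sequence of balls together with a diagonal argument, provides a subsequence of $u_m$ converging strongly in $L^2_{\mathrm{loc}}$. Writing the nonlinearity against a compactly supported divergence-free test function $\varphi$ as $\int((u_m,\Nx)u_m,\varphi)\,dt=-\int u_m^iu_m^j\partial_{x_i}\varphi^j\,dt$ and using strong $L^2$-convergence on $\operatorname{supp}\varphi$, I can pass to the limit and verify that the limit $u$ satisfies \eqref{1.weak}. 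Uniqueness is then standard: for the difference $w=u_1-u_2$ of two solutions with the same initial datum, testing the difference equation with $w$ and estimating $|((w,\Nx)u_1,w)|\le\|w\|_{L^4}^2\|\Nx u_1\|$ by the two-dimensional Ladyzhenskaya inequality $\|w\|_{L^4}^2\le c\,\|w\|\,\|\Nx w\|$ gives, after absorbing $\nu\|\Nx w\|^2$,
$$
\frac{d}{dt}\|w\|^2\le\frac c\nu\|\Nx u_1\|^2\|w\|^2,
$$
with $\|\Nx u_1\|^2\in L^1(0,T)$; since $w(0)=0$, Gronwall forces $w\equiv0$.

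It remains to obtain the energy equality. The regularity $u\in L^2(0,T;H^1)$ from \eqref{1.def}, together with $\Dt u\in L^2(0,T;H^{-1})$ (obtained as for the approximations), implies by the Lions--Magenes lemma that $u\in C([0,T];L^2)$ and that $t\mapsto\|u(t)\|^2$ is absolutely continuous with $\frac{d}{dt}\|u\|^2=2\langle\Dt u,u\rangle$ for a.e.\ $t$. The key two-dimensional point is that Ladyzhenskaya's inequality gives $u\in L^4(0,T;L^4)$, since $\int_0^T\|u\|_{L^4}^4\,dt\le c\int_0^T\|u\|^2\|\Nx u\|^2\,dt<\infty$; this makes the pairing $\langle P[(u,\Nx)u],u\rangle$ well defined and equal to zero. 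Pairing the equation with $u$ is therefore legitimate and yields \eqref{1.energy} directly, rather than merely the inequality that a weak limit would give.
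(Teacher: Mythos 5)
Your proposal is correct and takes essentially the same route as the paper: the energy equality is obtained exactly as there (Ladyzhenskaya $\Rightarrow u\in L^4(0,T;L^4)$ $\Rightarrow (u,\Nx)u,\ \Dt u\in L^2(0,T;\mathcal H^{-1})$, so pairing the equation with $u$ is legitimate and the nonlinear term vanishes by the orthogonality relation), and uniqueness is the same Ladyzhenskaya--Gronwall estimate. The Galerkin-plus-local-compactness existence argument you spell out is precisely what the paper delegates to the standard references (and mirrors the Aubin--Lions argument the paper itself uses later for asymptotic compactness), so the two proofs coincide in substance.
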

\begin{proof} For the convenience of the reader, we remind below the key steps of the proof. 
It is strongly based on the so-called Ladyzhenskaya interpolation inequality
\begin{equation}\label{1.lad}
\|u\|_{L^4(\mathbb R^2)}^2\le C\|u\|\|\Nx u\|
\end{equation}
which holds for any $u\in H^1(\mathbb R^2)$. Indeed, this inequality together with \eqref{1.def} 
implies that any weak solution $u$ satisfies
$$
u\in L^4(0,T;L^4(\mathbb R^2))
$$
and, integrating by parts and using the Cauchy-Schwartz inequality, we see that
$$
|(u,\Nx)u,\varphi)|=
\bigl|\sum_{i,j=1}^2(u^iu^j,\partial_{x_j}\varphi^i)\bigr|\le C\|u\|^2_{L^4}\|\varphi\|_{H^1}.
$$
Therefore,
$$
\int_{\mathbb R}((u,\Nx u),\varphi)\,dt\le C\|u\|^2_{L^4(0,T;L^4(\mathbb R^2))}\|\varphi\|_{L^2(0,T;H^1(\mathbb R^2))}
$$
and
\begin{equation}\label{1.neg}
(u,\Nx)u\in L^2(0,T;\mathcal H^{-1})=[L^2(0,T;\mathcal H^1)]^*,
\end{equation}
where, as usual, $\mathcal H^1$ is a subspace of $[H^1(\mathbb R^2)]^2$
which consists of divergence free vector fields and
$\mathcal H^{-1}:=[\mathcal H^1]^*$. Thus, from \eqref{1.weak},
we see that $\Dt u\in L^2(0,T;\mathcal H^{-1})$ as well, and all
terms in \eqref{1.weak} make sense for the test function
$\varphi=u\in L^2(0,T;\mathcal H^1)$, so by approximation
arguments (and the fact that the divergent free vector fields
with compact support are dense in $\mathcal H$) we may take
$\varphi=u$ in \eqref{1.weak}. Then, using the well-known
orthogonality relation
\begin{equation}\label{1.null}
((u,\Nx)v,v)=-\frac12(\divv u,|v|^2)=0,
\end{equation}
we end up with the desired energy equality \eqref{1.energy},
see, for instance, \cite{temam} for the details.
\par
The existence of a weak solution can be obtained in a standard way
based on the energy equality \eqref{1.energy} either directly by
the Faedo-Galerkin method or by approximation the problem
\eqref{1.ns-main} in $\mathbb R^2$ by the corresponding problems
in bounded domains (see e.g., \cite{temam,temam1} for the details),
 so we leave this proof to the reader and remind below the proof
 of the uniqueness. Let $u_1$ and $u_2$ be two weak solutions of
 problem \eqref{1.ns-main} and let $v=u_1-u_2$. Then $v$ solves
$$
\Dt v+(u_1,\Nx)v+(v,\Nx)u_2+\alpha v+\Nx \bar p=
\nu\Dx v,\ \ \divv v=0.
$$
Multiplying this equation by $v$ and integrating over
$x\in\mathbb R^2$, analogously to the energy equality, we have
\begin{equation}\label{1.dif-en}
\frac12\frac d{dt}\|v(t)\|^2+\alpha\|v(t)\|^2+
\nu\|\Nx v(t)\|^2=-((u_1,\Nx v),v)-((v,\Nx)u_1,v).
\end{equation}
The first term on the right-hand side of this equality vanishes
in view of~\eqref{1.null} and the second
term can be estimated using the Ladyzhenskaya inequality as follows:
\begin{multline*}
|((v,\Nx)u_2,v)|\le\|v\|^2_{L^4}\|\Nx u_2\|\le
 C\|v\|\|\Nx v\|\|\Nx u_2\|\le\\\le \nu\|\Nx v\|^2+
 C^2\nu^{-1}\|\Nx  u_2\|^2\|v\|^2.
\end{multline*}
Inserting this estimate into the right-hand side of
\eqref{1.dif-en} we obtain
$$
\frac12\frac d{dt}\|v(t)\|^2\le
 C^2\nu^{-1}\|\Nx u_2(t)\|^2\|v(t)\|^2,
$$
and the Gronwall inequality  gives
$$
\|u_1(t)-u_2(t)\|^2\le
 e^{2C^2\nu^{-1}\int_0^t\|\Nx u_2(s)\|^2ds}\|u_1(0)-u_2(0)\|^2.
$$
Thus, the uniqueness is proved and the theorem is also proved.
\end{proof}
\begin{corollary}\label{Cor1.dis} The weak solution $u(t)$ of problem \eqref{1.ns-main}
satisfies the following dissipative estimate:
\begin{equation}\label{1.dis}
\aligned
\|u(t)\|^2&\le\|u_0\|^2e^{-\alpha t}+\alpha^{-2}\|g\|^2,\\
2\nu\int_t^{t+T}\|\Nx u(s)\|^2ds&\le\alpha^{-1}T\|g\|^2+\|u(t)\|^2.
\endaligned
\end{equation}
\end{corollary}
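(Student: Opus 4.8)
The plan is to derive both inequalities directly from the energy equality \eqref{1.energy}, estimating the forcing term $(g,u)$ by Cauchy--Schwarz followed by Young's inequality, and then reading off the differential and integrated consequences. No analytical tool beyond \eqref{1.energy} is needed.

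For the first estimate I would bound the right-hand side of \eqref{1.energy} as
$$
(g,u)\le\|g\|\,\|u\|\le\frac{\alpha}2\|u\|^2+\frac1{2\alpha}\|g\|^2,
$$
substitute this into \eqref{1.energy}, and discard the nonnegative viscous term $\nu\|\Nx u\|^2$. This yields the differential inequality
$$
\frac{d}{dt}\|u(t)\|^2+\alpha\|u(t)\|^2\le\alpha^{-1}\|g\|^2,
$$
valid for almost every $t$. Since $t\mapsto\|u(t)\|^2$ is absolutely continuous (a consequence of $u\in L^2(0,T;\mathcal H^1)$ together with $\Dt u\in L^2(0,T;\mathcal H^{-1})$, both established in the proof of Theorem~\ref{Th1.main}), the Gronwall lemma applies in its integral form and gives
$$
\|u(t)\|^2\le\|u_0\|^2e^{-\alpha t}+\alpha^{-1}\|g\|^2\int_0^te^{-\alpha(t-s)}\,ds\le\|u_0\|^2e^{-\alpha t}+\alpha^{-2}\|g\|^2,
$$
which is the first line of \eqref{1.dis}.

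For the second estimate, rather than differentiating I would integrate \eqref{1.energy} over the interval $[t,t+T]$, obtaining
$$
\tfrac12\|u(t+T)\|^2+\nu\int_t^{t+T}\|\Nx u(s)\|^2\,ds+\alpha\int_t^{t+T}\|u(s)\|^2\,ds=\tfrac12\|u(t)\|^2+\int_t^{t+T}(g,u(s))\,ds.
$$
Discarding the nonnegative term $\tfrac12\|u(t+T)\|^2$ and inserting the same Young estimate $(g,u)\le\frac{\alpha}2\|u\|^2+\frac1{2\alpha}\|g\|^2$, the contribution $\int_t^{t+T}(g,u)\,ds$ is absorbed into $\alpha\int_t^{t+T}\|u\|^2\,ds$, leaving precisely
$$
2\nu\int_t^{t+T}\|\Nx u(s)\|^2\,ds\le\|u(t)\|^2+\alpha^{-1}T\|g\|^2.
$$

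I do not anticipate a genuine obstacle: the statement is a textbook dissipative a priori estimate, and its two lines are simply the differential and integrated forms of the same energy balance. The only point demanding a modicum of care is the Gronwall step, since \eqref{1.energy} holds merely for almost every $t$; this is settled by the absolute continuity of $\|u(t)\|^2$, which legitimizes integrating the differential inequality. The one genuine \emph{choice} in the argument is the splitting constant in Young's inequality: taking the weight so that half of the damping $\alpha\|u\|^2$ remains available is exactly what makes the dissipation strong enough to swallow the forcing and produce the stated coefficients $\alpha^{-2}$ and $\alpha^{-1}T$.
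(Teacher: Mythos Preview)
Your proof is correct and follows exactly the route sketched in the paper: Cauchy--Schwarz and Young to control $(g,u)$, then Gronwall for the first line and direct integration of \eqref{1.energy} for the second. The paper's own proof is just the one-sentence version of what you wrote out in full.
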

\begin{proof}
Using the Cauchy--Schwartz
inequality,  the first
estimate follows from~\eqref{1.energy} by the Gronwall inequality,
and the second estimate is proved by integrating~\eqref{1.energy}.
\end{proof}
Thus, equation \eqref{1.ns-main} defines a solution semigroup $S(t):\mathcal H\to\mathcal H$:
$$
S(t)u_0:=u(t),\ \ u_0\in\mathcal H
$$
where $u(t)$ is a weak solution of equation \eqref{1.ns-main}
with the initial data $u(0)=u_0$. Moreover, this semigroup
is dissipative according to estimate \eqref{1.dis} and is Lipschitz continuous:
for any $u_1,u_2\in\mathcal H$
\begin{equation}\label{1.lipp}
\|S(t)u_1-S(t)u_2\|\le Ce^{Kt}\|u_1-u_2\|,
\end{equation}
where the constants $C$ and $K$ depend only on $\|u_0^i\|$.
Our next aim is to verify the existence of a global attractor for
this semigroup. For the reader convenience, we first remind its
definition, see \cite{BV,Ch-V-book, CF88,temam} for more details.

\begin{definition}\label{Def1.attr} Let $S(t)$, $t\ge0$, be a semigroup acting in a Banach space $\mathcal H$. Then the set $\mathcal A\subset\mathcal H$ is a global attractor of the semigroup $S(t)$ if
\par
1) The set $\mathcal A$ is compact in $\mathcal H$.
\par
2) It is strictly invariant: $S(t)\mathcal A=\mathcal A$.
\par
3) It attracts the images of bounded sets in $\mathcal H$ as $t\to\infty$, i.e., 
for every bounded set $B\subset \mathcal H$ and every neighborhood 
$\mathcal O(\mathcal A)$ of the set $\mathcal A$ in $\mathcal H$ 
there exists $T=T(B,\mathcal O)$ such that
$$
S(t)B\subset\mathcal O(\mathcal A)
$$
for all $t\ge T$.
\end{definition}
To state the abstract theorem on the existence of a global attractor, we need more definitions.

\begin{definition} Let $S(t)$ be a semigroup in a Banach space $\mathcal H$. 
Then, a set $\mathcal B\subset\mathcal H$ is an {\it absorbing} set of $S(t)$ if, 
for every bounded set $B\subset\mathcal H$, there exists $T=T(B)$ such that
$$
S(t)B\subset\mathcal B.
$$
A semigroup $S(t)$ is {\it asymptotically compact} if for
any bounded sequence $u_0^n$ in $\mathcal H$ and for any
sequence $t_n\to\infty$, the sequence $S(t_n)u_0^n$ is precompact in $\mathcal H$.
\end{definition}
In order to verify the existence of a global attractor we
will use the following criterion, see \cite{BV,Ch-V-book,Lad,S-Y,temam} for its proof.

\begin{proposition}\label{Prop1.attr} Let $S(t)$ be a semigroup
in a Banach space $\mathcal H$. Suppose that
\par
1) $S(t)$ possesses a bounded closed absorbing set $\mathcal B\subset H$;
\par
2) $S(t)$ is asymptotically compact;
\par
3) For every fixed $t\ge0$ the map $S(t):\mathcal B\to\mathcal H$ is continuous.
\par
Then the semigroup $S(t)$ possesses a global attractor
$\mathcal A\subset \mathcal B$. Moreover, the attractor $\mathcal A$ has the following structure:
$$
\mathcal A=\mathcal K\big|_{t=0},
$$
where $\mathcal K\subset L^\infty(\mathbb R,\mathcal H)$ is the set of complete
trajectories $u:\mathbb R\to\mathcal H$ of semigroup $S(t)$ which are defined
for all $t\in\mathbb R$ and bounded.
\end{proposition}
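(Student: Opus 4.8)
The plan is to realize the attractor as the $\omega$-limit set of the absorbing set $\mathcal B$,
$$
\omega(\mathcal B):=\bigcap_{s\ge0}\overline{\bigcup_{t\ge s}S(t)\mathcal B},
$$
and to verify the defining properties of Definition~\ref{Def1.attr} together with the complete-trajectory characterization. I would work throughout with the equivalent sequential description: $\xi\in\omega(\mathcal B)$ if and only if there exist $t_n\to\infty$ and $u_n\in\mathcal B$ with $S(t_n)u_n\to\xi$. Since $\mathcal B$ is closed and absorbing, $S(t)\mathcal B\subset\mathcal B$ for $t\ge T(\mathcal B)$, so $\omega(\mathcal B)\subset\mathcal B$; in particular every point occurring below lies in $\mathcal B$, so hypothesis 3) (continuity of $S(t)$ on $\mathcal B$) is always applicable. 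Nonemptiness and compactness of $\mathcal A:=\omega(\mathcal B)$ then follow from asymptotic compactness: any sequence $S(t_n)u_n$ with $u_n\in\mathcal B$ bounded and $t_n\to\infty$ has a convergent subsequence, and a diagonal argument over sequences approximating points of $\omega(\mathcal B)$ shows that every sequence in $\omega(\mathcal B)$ has a subsequence converging to a limit which, $\omega(\mathcal B)$ being an intersection of closed sets, again belongs to $\omega(\mathcal B)$.

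Next I would establish strict invariance $S(t)\mathcal A=\mathcal A$. The forward inclusion follows from $S(t)\xi=\lim_nS(t)S(t_n)u_n=\lim_nS(t+t_n)u_n$, using the semigroup property and continuity of $S(t)$ on $\mathcal B$. For the reverse inclusion, given $\xi=\lim_nS(t_n)u_n\in\mathcal A$ I would factor $S(t_n)u_n=S(t)\,S(t_n-t)u_n$ for $t_n>t$; asymptotic compactness produces a subsequence with $S(t_{n_k}-t)u_{n_k}\to\eta$, and since $u_{n_k}\in\mathcal B$ and $t_{n_k}-t\to\infty$ we have $\eta\in\mathcal A$, while continuity yields $S(t)\eta=\xi$, so $\xi\in S(t)\mathcal A$.

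For attraction, if $\mathcal A$ failed to attract $\mathcal B$ there would be $\varepsilon>0$, $t_n\to\infty$ and $u_n\in\mathcal B$ with $\mathrm{dist}(S(t_n)u_n,\mathcal A)\ge\varepsilon$, contradicting that every subsequential limit of $S(t_n)u_n$ lies in $\mathcal A$ by definition; attraction of an arbitrary bounded set $B$ follows because $S(T)B\subset\mathcal B$ for $T=T(B)$, whence $\mathrm{dist}(S(t)B,\mathcal A)\le\mathrm{dist}(S(t-T)\mathcal B,\mathcal A)\to0$. Finally, for the structural description, if $u(\cdot)$ is a complete bounded trajectory then its range is bounded and $u(0)=S(s)u(-s)$, so attraction of this range forces $\mathrm{dist}(u(0),\mathcal A)=0$ and hence $u(0)\in\mathcal A$, giving $\mathcal K|_{t=0}\subset\mathcal A$; conversely, strict invariance lets me choose for any $\xi\in\mathcal A$ a backward chain of preimages inside $\mathcal A$ and interpolate by the forward semigroup to build a complete trajectory through $\xi$ lying in the compact, hence bounded, set $\mathcal A$, giving $\mathcal A\subset\mathcal K|_{t=0}$.

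The main obstacle is that, unlike in bounded domains, the semigroup here is only asymptotically compact and not eventually compact, so compactness cannot be read off a single map $S(t)$. Every compactness-extraction step — closedness and compactness of $\mathcal A$, the reverse invariance inclusion, and the backward construction — must therefore be arranged to feed sequences of the form $S(t_n)u_n$ with $t_n\to\infty$ into the asymptotic-compactness hypothesis, and the care lies precisely in verifying that the required sequences have this structure.
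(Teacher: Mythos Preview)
The paper does not supply a proof of this proposition at all; immediately before stating it the authors write ``see \cite{BV,Ch-V-book,Lad,S-Y,temam} for its proof'' and then move on to apply the criterion. So there is nothing in the paper to compare against beyond the abstract hypotheses as stated.

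Your argument is the standard one found in those references and is correct. A couple of minor points that are implicit in your sketch and worth making explicit if you expand it: (i) the diagonal/approximation step for compactness of $\omega(\mathcal B)$ really does need asymptotic compactness and not merely closedness of $\omega(\mathcal B)$, since a priori $\omega(\mathcal B)$ is only a closed subset of a noncompact ball, and your sketch handles this correctly; (ii) in the backward construction of a complete trajectory through $\xi\in\mathcal A$, the well-definedness of $u(t):=S(t+n)\xi_{-n}$ for $t\ge -n$ across different choices of $n$ follows from the semigroup identity $S(t+n+1)\xi_{-(n+1)}=S(t+n)S(1)\xi_{-(n+1)}=S(t+n)\xi_{-n}$, and then strict invariance gives $u(t)\in\mathcal A$ for every $t$, not just at the integer times. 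With those details filled in, your outline constitutes a complete proof.
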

The next theorem which establishes the existence of a global attractor for the solution
semigroup $S(t)$ associated with equation \eqref{1.ns-main} is the main result of this section.

\begin{theorem}\label{Th1.attr}
The solution semigroup  $S(t)$ of the damped Navier-Stokes problem \eqref{1.ns-main}
possesses a global attractor $\mathcal A$ in $\mathcal H$.
\end{theorem}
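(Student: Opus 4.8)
The plan is to verify the three hypotheses of Proposition~\ref{Prop1.attr} for $S(t)$. The first and third are essentially already in hand: the dissipative estimate \eqref{1.dis} of Corollary~\ref{Cor1.dis} shows that any ball $\mathcal B\subset\mathcal H$ of radius strictly larger than $\alpha^{-1}\|g\|$ is a bounded closed absorbing set, while the Lipschitz bound \eqref{1.lipp} gives the continuity of $S(t):\mathcal B\to\mathcal H$ for each fixed $t$. The whole difficulty is concentrated in the second hypothesis, the asymptotic compactness, which does not follow from boundedness in $\mathcal H^1$ alone, since on the unbounded domain $\mathbb R^2$ the embedding $\mathcal H^1\hookrightarrow\mathcal H$ is not compact. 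I would establish it by the energy equality (Ball--Rosa) method.

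Thus I would take a bounded sequence $u_0^n\in\mathcal H$ and times $t_n\to\infty$, set $w_n:=S(t_n)u_0^n$, and aim to show that $\{w_n\}$ is precompact in $\mathcal H$. By \eqref{1.dis} the $w_n$ are uniformly bounded, so after passing to a subsequence $w_n\rightharpoonup w$ weakly in $\mathcal H$, and weak lower semicontinuity of the norm gives $\|w\|\le\liminf_n\|w_n\|$. It therefore suffices to prove $\limsup_n\|w_n\|\le\|w\|$, since in a Hilbert space weak convergence together with convergence of norms yields strong convergence. To this end I would first build a limiting complete trajectory: viewing $u_n(s):=S(s+t_n)u_0^n$ on the intervals $[-T,0]$ (defined once $t_n>T$), the dissipative estimates bound $u_n$ uniformly in $L^\infty(-T,0;\mathcal H)\cap L^2(-T,0;\mathcal H^1)$, and \eqref{1.neg} together with the equation bounds $\Dt u_n$ in $L^2(-T,0;\mathcal H^{-1})$. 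A diagonal extraction over $T\to\infty$ then yields a limit $u$ with $u_n\rightharpoonup u$ weakly in these spaces and $u(0)=w$; the Aubin--Lions lemma on bounded spatial subdomains gives strong $L^2_{\mathrm{loc}}$ convergence, which suffices to pass to the limit in the quadratic nonlinearity and confirm that $u$ is a bounded complete weak solution, hence an element of the trajectory set $\mathcal K$.

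The crux is then the energy argument. Rewriting the energy equality \eqref{1.energy} with the integrating factor $e^{2\alpha s}$ gives, for any weak solution $v$,
\[
\|v(0)\|^2=e^{-2\alpha T}\|v(-T)\|^2+2\int_{-T}^{0}e^{2\alpha s}\big[(g,v(s))-\nu\|\Nx v(s)\|^2\big]\,ds .
\]
Applying this with $v=u_n$ and taking $\limsup_n$, the boundary term is controlled by $e^{-2\alpha T}R^2$, where $R$ is the radius of $\mathcal B$; the linear term passes to the limit by weak convergence (testing against the fixed function $e^{2\alpha s}g\in L^2(-T,0;\mathcal H)$); and the dissipative term, being minus a weighted squared norm, is weakly lower semicontinuous, so that $\limsup_n\big(-\nu\!\int_{-T}^0 e^{2\alpha s}\|\Nx u_n\|^2\big)\le-\nu\!\int_{-T}^0 e^{2\alpha s}\|\Nx u\|^2$. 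Using the same identity for the limit trajectory $u$ — legitimate precisely because $u$ is itself a weak solution and hence satisfies the energy equality of Theorem~\ref{Th1.main} — I obtain $\limsup_n\|w_n\|^2\le e^{-2\alpha T}R^2+\|w\|^2$, and letting $T\to\infty$ closes the estimate. This establishes the asymptotic compactness, and Proposition~\ref{Prop1.attr} then delivers the attractor.

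I expect the main obstacle to be the second step, the rigorous construction of the limit trajectory on $\mathbb R^2$: one must combine weak compactness in the energy spaces with local-in-space strong compactness from Aubin--Lions to handle the nonlinear term, and then verify that the limit genuinely solves the equation and satisfies the energy equality. It is exactly the validity of the energy equality for the limit that makes the method work, and the non-compactness of the domain means this cannot be taken for granted.
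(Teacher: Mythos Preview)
Your proposal is correct and follows essentially the same route as the paper: verify the absorbing set and continuity from \eqref{1.dis} and \eqref{1.lipp}, then establish asymptotic compactness by the Ball--Rosa energy method, constructing the limiting complete trajectory via weak compactness plus Aubin--Lions on bounded balls and concluding norm convergence from the energy equality with the $e^{2\alpha s}$ integrating factor and weak lower semicontinuity of $\|\Nx u\|^2$. The only cosmetic difference is that the paper integrates from $-t_n$ to $0$ so that the boundary term $e^{-2\alpha t_n}\|u_n(-t_n)\|^2$ vanishes directly as $n\to\infty$, whereas you integrate on $[-T,0]$ and send $T\to\infty$ afterward; both are equivalent.
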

\begin{proof} We will check the assumptions of Proposition
\ref{Prop1.attr}. Indeed, the first assumption is satisfied
due to the dissipative estimate \eqref{1.dis} and the desired
bounded and closed absorbing set can be taken as
\begin{equation}\label{1.abs}
\mathcal B:=\big\{u_0\in\mathcal H\,:\ \|u_0\|_{L^2}^2\le 2\alpha^{-2}\|g\|^2_{L^2}\big\}.
\end{equation}
The third assumption is also satisfied due to estimate \eqref{1.lipp}.
Thus, we only need to check the asymptotic compactness. We will use the
so-called energy method (see \cite{ball,rosa, Rosa}) in order to do this.
\par
Indeed, let $u^0_n\in\mathcal H$ be a bounded sequence of the
initial data. Then, due to estimate \eqref{1.dis}, we may assume
without loss of generality that $u^0_n\in\mathcal B$.
Let $u_n(t)$, $t\ge-t_n$, $t_n\to\infty$,  be the sequence of solutions of
\begin{multline*}
\Dt u_n+(u_n,\Nx)u_n+\alpha u_n+\Nx p_n=\nu\Dx u_n+g,\\
\ \divv u_n=0,\ \ u_n\big|_{t=-t_n}=u_n^0.
\end{multline*}
Then, $u_n(0)=S(t_n)u_n^0$ and we only need to verify that the
sequence $\{u_n(0)\}_{n=0}^\infty$ is precompact in $\mathcal H$.
In order to do so, we first verify that there exists a subsequence
 (which we also denote by $u_n$ for simplicity) such that
\begin{equation}\label{1.weak-n}
u_n(0)\rightharpoondown u(0)
\end{equation}
converges weakly in $\mathcal H$ to  $u(0)$, where
$u(t)$ is a complete bounded
trajectory $u\in \mathcal{K}$.  We first note that due to the dissipative estimate \eqref{1.dis},
\begin{equation}\label{1.uniform}
\|u_n\|_{L^\infty(-T,T;\mathcal H)}+
\sup_{t\ge -T}\|u_n\|_{L^2(t,t+1;\mathcal H^1)}\le C,
\end{equation}
where $T\le t_n$ and $C$ is independent of $n$ and $T$. Moreover,
from the Ladyzhenskaya inequality \eqref{1.lad} we conclude also
that $u_n$ is bounded in $L^4(-T,T;L^4)$. Thus, by the
Banach-Alaoglu theorem, we may assume without loss of generality that
\begin{multline*}
u^n\to u\ \text{weak-star in }\ L^\infty(-T,T;\mathcal H) \\
\text{and weakly in\ } L^2(-T,T;H^1)\cap L^4(-T,T;L^4)
\end{multline*}
for every $T\in\mathbb N$ to some function $u\in L^\infty(\mathbb R;\mathcal H)\cap L^2_{loc}(\mathbb R,H^1)$ which also satisfies estimate \eqref{1.uniform}. Moreover, analogously to \eqref{1.neg}, we conclude that $\Dt u_n$ is uniformly bounded in $L^2(t,t+1;\mathcal H^{-1})$. Thus, without loss of generality
$$
\Dt u^n\rightharpoondown\Dt u
$$
in the space $L^2(t,t+1;\mathcal H^{-1})$ for all $t\in\mathbb R$. Using
the embedding
$$
L^2(t,t+1;\mathcal H^{-1})\cap L^2(t,t+1;\mathcal H^1)\subset C(t,t+1;\mathcal H),
$$
see  \cite[Lemma III.1.2]{temam},
 we see that $u_n(0)\rightharpoondown u(0)$. Thus, to verify
 \eqref{1.weak-n}, we only need to check that $u(t)$, $t\in\mathbb R$ is
 a weak solution of \eqref{1.ns-main}. In other words, we need to
 pass to the limit as $n\to\infty$ in the analogue of \eqref{1.weak}:
\begin{multline}\label{1.weak1}
-\int_{\mathbb R}(u_n,\Dt\phi)\,dt+\int_{\mathbb R}((u_n,\Nx)u_n,\varphi)\,dt+
\alpha\int_{\mathbb R}(u_n,\varphi)\,dt+\\+\nu\int_{\mathbb R}
(\Nx u_n,\Nx\varphi)\,dt=\int_{\mathbb R}(g,\varphi)\,dt,
\end{multline}
where $\varphi(t,x)\in C_0^\infty(\mathbb R^3)^2$ is an arbitrary fixed
divergence free function. Since passing to the limit in the linear
terms is evident, we only need to pass to the limit $n\to\infty$ in
the non-linear term. Integrating by parts and rewriting the nonlinear term in the form
$$
((u_n(t),\Nx)u_n(t),\varphi(t))=-\sum_{i,j=1}^2
(u_n^i(t)u_n^j(t)\partial_{x_i}\varphi^j(t))
$$
and using the fact that the support of $\varphi$ is finite, we conclude that, for passing to the limit in the nonlinear term, it is enough to verify that, for every fixed $R,T>0$,
\begin{equation}\label{1.strong}
u_n\to u \ \text{ strongly in } L^2(-T,T;L^2(B^R_0)),
\end{equation}
where $B^R_0$ stands for the ball of radius $R$ in $\mathbb R^2$ centered
at the origin. To verify the convergence \eqref{1.strong}, we
recall that the sequence $u_n$ is bounded in the space
$L^2(-T,T;\mathcal H^1(B^R_0))$ due to the dissipative estimate
\eqref{1.dis}. Moreover, analogously to \eqref{1.neg} but using
the test functions $\varphi\in L^2(-T,T;\mathcal H^1_0(B^R_0))$,
we see that $\Dt u_n$ is bounded in the space
$L^2(-T,T;\mathcal H^{-1}(B^R_0))$. Thus, since
$$
\mathcal H^{1}(B^R_0))\subset\mathcal H(B^R_0)\subset\mathcal H^{-1}(B^R_0)
$$
and the first embedding is compact, the
compactness theorem (see, for instance~\cite[Theorem~III.2.1]{temam})
 implies that the embedding
$$
H^1(-T,T;\mathcal H^{-1}(B^R_0))\cap L^2(-T,T;\mathcal H^1(B^R_0))\subset L^2(-T,T;\mathcal H)
$$
is compact. Therefore, $u_n$ is precompact in
$L^2(-T,T;L^2(B^R_0))$ for every $R>0$, $T>0$ and passing to
a subsequence if necessary, we conclude that the convergence
\eqref{1.strong} indeed holds. Thus, passing to the limit in the
nonlinear term of \eqref{1.weak1} is verified and $u$ is a weak
solution of \eqref{1.ns-main} which is defined for all $t\in\mathbb{R}$ and bounded, so
$u\in\mathcal K$. This means that the
convergence \eqref{1.weak-n} is verified.
\par
We are now ready to verify that
\begin{equation}\label{1.strong-n}
u_n(0)\to u(0) \ \text{strongly in } \ \mathcal H
\end{equation}
and finish the proof of the theorem. We multiply   the energy equality~\eqref{1.energy}
for the solutions $u_n$ by $ e^{2\alpha t}$  and integrate from $-t_n$ to $0$:
\begin{multline}\label{1.energy-n}
\|u_n(0)\|^2=-2\int_{-t_n}^0e^{2\alpha s}\|\Nx u_n(s)\|^2\,ds+\\
+\|u_n(-t_n)\|^2e^{-2\alpha t_n}+2\int_{-t_n}^0e^{2\alpha s}(g,u_n(s))\,ds.
\end{multline}
We want to pass to the limit $n\to\infty$ in this equality.
Indeed, using the weak convergence $u_n\to u$ in $L^2_{loc}(\mathbb R,H^1)$ implying that
\begin{multline*}
\limsup_{n\to\infty}-2\int_{-t_n}^0e^{2\alpha s}\|\Nx u_n(s)\|^2\,ds=\\=
-2\liminf_{n\to\infty}\int_{-t_n}^0e^{2\alpha s}\|\Nx u_n(s)\|^2\,ds\le
-2\int_{-\infty}^0e^{2\alpha s}\|\Nx u(s)\|^2\,ds
\end{multline*}
and the uniform bounds \eqref{1.uniform},  we see from~\eqref{1.energy-n} that
$$
\limsup_{n\to\infty}\|u_n(0)\|^2\le-2\int_{-\infty}^0e^{2\alpha s}\|\Nx u(s)\|^2\,ds
+ 2\int_{-\infty}^0e^{2\alpha s}(g,u(s))\,ds.
$$
On the other hand, thanks to the energy equality, for the whole-line $L^2$-bounded solution
$u\in\mathcal K$ we have
$$
\|u(0)\|^2=-2\int_{-\infty}^0e^{2\alpha s}\|\Nx u(s)\|^2\,ds+
2\int_{-\infty}^0e^{2\alpha s}(g,u(s))\,ds
$$
and, therefore, taking into the account the weak convergence
\eqref{1.weak-n}, we finally arrive at
$$
\limsup_{n\to\infty}\|u_n(0)\|^2\le\|u(0)\|^2\le\liminf_{n\to\infty}\|u_n(0)\|^2.
$$
 Thus, $\lim_{n\to\infty}\|u_n(0)\|=\|u(0)\|$, and the strong
 convergence \eqref{1.strong-n} is proved:
$$
\lim_{n\to\infty}\|u_n(0)-u(0)\|^2=
\lim_{n\to\infty}\left(\|u_n(0)\|^2+\|u(0)\|^2-2(u_n(0),u(0))\right)
=0,
$$
 and the theorem is also proved.
\end{proof}
To conclude the section, we also discuss the extra regularity of the attractor $\mathcal A$. We say that $u:\mathbb R\to\mathcal H^1$ is a {\it strong} solution of the damped Navier-Stokes equations \eqref{1.ns-main} if
$$
u\in C(0,T;\mathcal H^1)\cap L^2(0,T;H^2(\mathbb R^2))
$$
and equation \eqref{1.ns-main} is satisfied in the sense of distributions. The following analogue of Theorem \ref{Th1.main} gives the global well-posedness of strong solutions of the Navier-Stokes problem.

\begin{theorem}\label{Th1.strong}
Let  $u_0\in\mathcal H^1$. Then, the weak solution $u$ constructed
in Theorem \ref{Th1.main} is actually a strong solution and
satisfies for almost all $t\ge0$ the following analogue of the
energy equality:
\begin{equation}\label{1.strong-en}
\frac12\frac d{dt}\|\Nx u(t)\|^2+\nu\|\Dx u(t)\|^2
+\alpha\|\Nx u(t)\|^2=(g,\Dx u(t)).
\end{equation}
\end{theorem}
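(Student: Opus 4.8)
The plan is to read the statement as two facets of a single higher-order (enstrophy) energy estimate: the $H^1$-regularity that upgrades the weak solution to a strong one, and the identity \eqref{1.strong-en} itself. The decisive structural feature of the two-dimensional problem is that the quadratic nonlinearity, which already disappeared at the $L^2$-level through the orthogonality relation \eqref{1.null}, disappears a second time at the $H^1$-level; this is what makes strong solutions global with no smallness assumption and with only $g\in L^2$ required. I would first derive \eqref{1.strong-en} formally, and then justify it rigorously by the same approximation used in Theorem \ref{Th1.main}.

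For the formal step I would pass to the vorticity $\omega:=\rot u$. Applying $\rot$ to \eqref{1.ns-main} kills the pressure and gives the scalar transport--diffusion equation $\Dt\omega+(u,\Nx)\omega+\alpha\omega=\nu\Dx\omega+\rot g$. Multiplying by $\omega$ and integrating over $\mathbb R^2$, the transport term vanishes exactly as in \eqref{1.null}, since $((u,\Nx)\omega,\omega)=\tfrac12(u,\Nx|\omega|^2)=-\tfrac12(\divv u,|\omega|^2)=0$, leaving
\[
\frac12\frac{d}{dt}\|\omega\|^2+\nu\|\Nx\omega\|^2+\alpha\|\omega\|^2=(\rot g,\omega).
\]
For divergence-free fields on the whole plane one has, via the double-curl identity $\rot\rot u=-\Dx u$ (most cleanly checked on the Fourier side, with no boundary terms), the relations $\|\omega\|=\|\Nx u\|$, $\|\Nx\omega\|=\|\Dx u\|$ and $(\rot g,\omega)=(\rot g,\rot u)$; this turns the displayed identity into the $H^1$-energy identity \eqref{1.strong-en}, the forcing term being understood in the equivalent form $(\rot g,\rot u)$. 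Equivalently one may test \eqref{1.ns-main} directly against $-\Dx u$: the pressure drops because $\Dx u$ is divergence-free, and $((u,\Nx)u,\Dx u)=0$ in two dimensions. Bounding the forcing by $|(\rot g,\rot u)|=|(g,\Dx u)|\le\|g\|\,\|\Dx u\|\le\tfrac\nu2\|\Dx u\|^2+\tfrac1{2\nu}\|g\|^2$ and absorbing, I obtain the dissipative $H^1$-bound $\|\Nx u(t)\|^2\le e^{-2\alpha t}\|\Nx u_0\|^2+\tfrac1{2\alpha\nu}\|g\|^2$ together with $\Dx u\in L^2_{loc}$.

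To make this rigorous I would run the computation on the regular approximations already employed in Theorem \ref{Th1.main} (Faedo--Galerkin, or the approximation by bounded-domain problems, where the $H^1$-theory and all integrations by parts are classical), establish the identity and the uniform-in-$N$ bounds on $\|\Nx u_N\|_{L^\infty(0,T;L^2)}$ and $\|\Dx u_N\|_{L^2(0,T;L^2)}$ by Gronwall, and pass to the limit. Since Theorem \ref{Th1.main} guarantees the limit is the unique weak solution $u$, these bounds transfer to $u$, giving $u\in L^\infty(0,T;\mathcal H^1)\cap L^2(0,T;H^2)$. Returning to the equation and using $H^2\hookrightarrow L^\infty$ in two dimensions, one checks $(u,\Nx)u\in L^2(0,T;L^2)$, whence $\Dt u\in L^2(0,T;\mathcal H)$; combined with $u\in L^2(0,T;H^2)$ and the embedding lemma of \cite{temam} invoked after \eqref{1.strong}, this yields $u\in C(0,T;\mathcal H^1)$, so $u$ is a strong solution. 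This same regularity finally makes $-\Dx u$ an admissible test function, upgrading the limiting energy inequality to the equality \eqref{1.strong-en}.

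The main obstacle is exactly this rigorous justification of testing against $-\Dx u$ (equivalently, of multiplying the vorticity equation by $\omega$): a generic weak solution is not a priori known to satisfy $\Dx u\in L^2$, so neither the integration by parts nor the vanishing of the nonlinear term can be carried out directly, and the whole estimate must be run at the approximate level and then identified with $u$ through the uniqueness in Theorem \ref{Th1.main}. A secondary point needing care, specific to the unbounded domain, is the absence of boundary contributions in the identities $\|\Nx\omega\|=\|\Dx u\|$ and $(\rot g,\rot u)=(g,-\Dx u)$; these rest on $\rot\rot u=-\Dx u$ for divergence-free $u$ and are cleanest to verify by the Fourier transform, which also confirms that no decay beyond $u_0\in\mathcal H^1$ and $g\in L^2$ is required.
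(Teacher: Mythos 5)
Your proposal is correct and takes essentially the same route as the paper: the identity is obtained by testing \eqref{1.ns-main} with $\Dx u$ (equivalently, from the enstrophy balance for $\omega=\rot u$, which is the same computation in different variables), the nonlinear term vanishing by the two-dimensional orthogonality relation $((u,\Nx)u,\Dx u)=0$ --- which the paper itself verifies through the vorticity identity $\frac12\int_{\mathbb R^2}\divv(u\,\omega^2)\,dx=0$ --- and rigour is supplied by the same approximation scheme as in Theorem \ref{Th1.main}. One remark: your right-hand side $(\rot g,\rot u)$ equals $-(g,\Dx u)$ rather than $(g,\Dx u)$, so the two forms are not literally ``equivalent''; this sign discrepancy in fact originates in \eqref{1.strong-en} as stated (testing with $\Dx u$ produces $-(g,\Dx u)$ on the right), and it is harmless since every subsequent use, such as \eqref{1.strong-dis} and Corollary \ref{Cor2.dim-est1}, only invokes the bound $|(g,\Dx u)|\le\|g\|\,\|\Dx u\|$.
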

The proof of this theorem is analogous to Theorem \ref{Th1.main} and even
simpler since the solution $u$ is a priori more regular now, see, for instance,
\cite{temam} for more details. We only mention here that the identity
\eqref{1.strong-en} follows by multiplication of equation \eqref{1.ns-main}
by $\Dx u$, integration over $x\in\mathbb R^2$ and using the well-known
orthogonality relation
$
((u,\Nx)u,\Dx u)=0
$
which holds for any  $u\in [H^2(\mathbb R^2)]^2$, $\divv u=0$. In fact, integrating
by parts and setting $\omega:=\rot u=\partial_{x_1}u^2-\partial_{x_2}u^1$, the divergence theorem gives
$$
((u,\Nx)u,\Dx u)=\frac12\int_{\mathbb{R}^2}\divv (u \omega^2)\,dx=0
$$
for a smooth $u\in C^\infty_0(\mathbb{R}^2)^2$, $\divv u=0$. The general case follows
by a standard approximation procedure.

The next corollary gives the dissipative estimate for the solutions
of \eqref{1.ns-main} in $\mathcal H^1$, and its proof is similar to
that of~\eqref{1.dis}.
\begin{corollary}\label{Cor1.strong-dis}
The strong solution
satisfies the following estimate:
\begin{equation}\label{1.strong-dis}
\aligned
\|\Nx u(t)\|^2&\le\|\Nx u_0\|^2e^{-\alpha t}+
(2\alpha\nu)^{-1}\|g\|^2,\\
\nu\int_t^{t+T}\|\Dx u(s)\|^2ds&\le\nu^{-1}T\|g\|^2+\|\Nx u(t)\|^2.
\endaligned
\end{equation}
\end{corollary}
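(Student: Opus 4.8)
The plan is to mimic exactly the derivation of the dissipative estimate \eqref{1.dis} from the energy equality \eqref{1.energy}, now starting from the higher-order energy equality \eqref{1.strong-en}. The only term needing attention is the right-hand side $(g,\Dx u)$, which I would control by the Cauchy--Schwarz and Young inequalities, absorbing half of the dissipative term $\nu\|\Dx u\|^2$:
\begin{equation*}
(g,\Dx u)\le\|g\|\,\|\Dx u\|\le\frac\nu2\|\Dx u\|^2+\frac1{2\nu}\|g\|^2.
\end{equation*}

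For the first estimate, I would insert this bound into \eqref{1.strong-en}, cancel the factor $\frac\nu2\|\Dx u\|^2$ against the dissipative term, and drop the remaining nonnegative contribution $\frac\nu2\|\Dx u\|^2$, arriving at the differential inequality
\begin{equation*}
\frac d{dt}\|\Nx u(t)\|^2+2\alpha\|\Nx u(t)\|^2\le\frac1\nu\|g\|^2.
\end{equation*}
The Gronwall inequality then yields $\|\Nx u(t)\|^2\le\|\Nx u_0\|^2e^{-2\alpha t}+(2\alpha\nu)^{-1}\|g\|^2$, and since $e^{-2\alpha t}\le e^{-\alpha t}$ for $t\ge0$ this implies the claimed bound; in fact the decay rate we obtain is slightly better than the one stated.

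For the second estimate, instead of invoking Gronwall I would integrate \eqref{1.strong-en} over the interval $[t,t+T]$. Using the same Young bound on $\int_t^{t+T}(g,\Dx u(s))\,ds$ and discarding the nonnegative terms $\frac12\|\Nx u(t+T)\|^2$ and $\alpha\int_t^{t+T}\|\Nx u(s)\|^2\,ds$, the term $\frac\nu2\int_t^{t+T}\|\Dx u(s)\|^2\,ds$ survives on the left, and multiplication by $2$ produces precisely $\nu\int_t^{t+T}\|\Dx u(s)\|^2\,ds\le\nu^{-1}T\|g\|^2+\|\Nx u(t)\|^2$.

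The computation is entirely routine, and I do not expect any genuine obstacle beyond the standard justification already needed for \eqref{1.dis}. The single point deserving a word of care is that \eqref{1.strong-en} holds only for almost every $t$, so one should record that $t\mapsto\|\Nx u(t)\|^2$ is continuous (indeed $u\in C(0,T;\mathcal H^1)$) and in fact absolutely continuous, which is what legitimizes both the Gronwall step in the first estimate and the fundamental theorem of calculus used in the integrated second estimate.
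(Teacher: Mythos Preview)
Your proposal is correct and matches the paper's intended argument: the paper simply states that the proof ``is similar to that of~\eqref{1.dis},'' i.e., apply Cauchy--Schwarz/Young to the right-hand side of \eqref{1.strong-en}, then use Gronwall for the first estimate and direct integration for the second. Your observation that absorbing into $\nu\|\Dx u\|^2$ actually yields the sharper decay $e^{-2\alpha t}$ (before weakening to the stated $e^{-\alpha t}$) is correct and harmless.
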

In the following corollary we establish the smoothing
property for the weak solutions of equation \eqref{1.ns-main}.
\begin{corollary}\label{Cor1.smo}
For  $t>0$, $u(t)\in\mathcal H^1$ and the following estimate holds:
\begin{equation}\label{1.smo}
\|u(t)\|^2_{H^1}\le Ct^{-1}\(\|u(0)\|^2+\|g\|^2\),\ 0<t\le1,
\end{equation}
where  $C=C(\alpha,\nu)$.
In particular, the attractor $\mathcal A$ is  bounded in $\mathcal H^1$.
\end{corollary}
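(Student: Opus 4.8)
The plan is to obtain \eqref{1.smo} by the standard parabolic smoothing (``multiplication by $t$'') argument applied to the higher-order energy equality \eqref{1.strong-en}, combined with the two a priori bounds already at our disposal: the $L^2$ dissipative estimate \eqref{1.dis} and the $H^1$ version \eqref{1.strong-dis}. The idea is that although $u_0$ lies only in $\mathcal H$, the weak solution satisfies $u\in L^2_{loc}(0,\infty;\mathcal H^1)$, so $\Nx u(s)$ is square integrable on $(0,t)$, and this integrated information can be upgraded to a pointwise bound on $\|\Nx u(t)\|$ that blows up no worse than $t^{-1}$ as $t\to0^+$.

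Concretely, I would multiply \eqref{1.strong-en} by $t$ and rewrite its first term via the product rule as $\frac12\frac d{dt}\(t\|\Nx u\|^2\)-\frac12\|\Nx u\|^2$. Estimating the right-hand side by Cauchy--Schwarz and Young, $t(g,\Dx u)\le \frac{\nu t}2\|\Dx u\|^2+\frac t{2\nu}\|g\|^2$, the term $\frac{\nu t}2\|\Dx u\|^2$ is absorbed into $\nu t\|\Dx u\|^2$, and dropping the remaining nonnegative terms $\frac{\nu t}2\|\Dx u\|^2+\alpha t\|\Nx u\|^2$ leaves the differential inequality $\frac d{dt}\(t\|\Nx u\|^2\)\le\|\Nx u\|^2+\frac t\nu\|g\|^2$. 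Integrating from $0$ to $t$ gives $t\|\Nx u(t)\|^2\le\int_0^t\|\Nx u(s)\|^2\,ds+\frac{t^2}{2\nu}\|g\|^2$, and the integral on the right is controlled by the second line of \eqref{1.dis} (with initial time $0$ and length $t$): $2\nu\int_0^t\|\Nx u(s)\|^2\,ds\le\alpha^{-1}t\|g\|^2+\|u(0)\|^2$. Dividing by $t$ and using $0<t\le1$ then yields $\|\Nx u(t)\|^2\le C t^{-1}\(\|u(0)\|^2+\|g\|^2\)$ with $C=C(\alpha,\nu)$; adding the first line of \eqref{1.dis}, which for $0<t\le1$ gives $\|u(t)\|^2\le\|u_0\|^2+\alpha^{-2}\|g\|^2\le t^{-1}\(\|u_0\|^2+\alpha^{-2}\|g\|^2\)$, produces the full $H^1$ estimate \eqref{1.smo}.

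The one genuinely delicate point, and the step I expect to be the main obstacle, is that \eqref{1.strong-en} has been established only for strong solutions, i.e. for $u_0\in\mathcal H^1$, whereas here $u_0$ is merely in $\mathcal H$. I would handle this by a regularization in time: since $\int_0^t\|\Nx u(s)\|^2\,ds<\infty$, the quantity $s\|\Nx u(s)\|^2$ cannot be bounded below by a positive constant near $0$, so there is a sequence $s_k\to0^+$ with $s_k\|\Nx u(s_k)\|^2\to0$ and $u(s_k)\in\mathcal H^1$. By Theorem~\ref{Th1.strong} the solution is strong on $[s_k,\infty)$, so \eqref{1.strong-en} holds there; carrying out the computation above on $[s_k,t]$ gives $t\|\Nx u(t)\|^2-s_k\|\Nx u(s_k)\|^2\le\int_{s_k}^t\|\Nx u\|^2\,ds+\frac{t^2}{2\nu}\|g\|^2$, and letting $s_k\to0$ removes the boundary term. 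Alternatively, one may run the whole argument on the Galerkin approximations, where \eqref{1.strong-en} is an honest identity, and pass to the limit using weak lower semicontinuity of the norms.

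Finally, the boundedness of $\mathcal A$ in $\mathcal H^1$ follows immediately from \eqref{1.smo} together with the structure $\mathcal A=\mathcal K|_{t=0}$: for any complete bounded trajectory $u\in\mathcal K$ the value $u(0)$ is produced from the datum $u(-1)\in\mathcal A\subset\mathcal B$, which satisfies $\|u(-1)\|^2\le2\alpha^{-2}\|g\|^2$, so applying \eqref{1.smo} with time origin shifted to $-1$ and $t=1$ bounds $\|u(0)\|_{H^1}^2$ by a constant depending only on $\alpha$, $\nu$ and $\|g\|$.
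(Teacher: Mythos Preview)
Your argument is correct, and it reaches the same estimate, but the mechanism differs from the paper's. The paper avoids the ``multiply by $t$'' trick entirely and instead uses a mean-value argument: since $\|\Nx u\|^2$ is integrable on $[0,T]$, there is a time $\tau\in(0,T]$ at which $\|\Nx u(\tau)\|^2$ is bounded by the average $\frac1T\int_0^T\|\Nx u\|^2\,ds$, which \eqref{1.dis} controls by $CT^{-1}(\|u(0)\|^2+\|g\|^2)$; then the strong dissipative bound \eqref{1.strong-dis}, applied from the initial time $\tau$, transports this to $\|\Nx u(T)\|^2$ with only an additive $\|g\|^2$ contribution. This is shorter and sidesteps your delicate point altogether: the existence of a single $\tau$ with $u(\tau)\in\mathcal H^1$ is all that is needed, and it comes for free. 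Your approach, by contrast, is the standard parabolic-smoothing device; it is slightly more systematic and would generalize more readily (e.g.\ to higher-order smoothing $t^k\|u\|_{H^k}^2$), at the cost of the regularization-in-time step you correctly flagged. For the boundedness of $\mathcal A$ in $\mathcal H^1$, the paper phrases it via an $\mathcal H^1$ absorbing ball, whereas you invoke the kernel representation $\mathcal A=\mathcal K|_{t=0}$; both are immediate once \eqref{1.smo} is in hand.
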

\begin{proof}
Since $\|\Nx u(t)\|^2$ is integrable,
 for every $0<T\le1$, there exists $\tau\le T$ such
 that $u(\tau)\in\mathcal H^1$ and
$$
\|\Nx u(\tau)\|^2\le \frac1T\int_0^T\|\Nx u(s)\|^2ds\le
\frac1{2\nu T}\(\|u(0)\|^2+\alpha^{-1}T\|g\|^2\),
$$
where the second inequality follows from~\eqref{1.dis}.
By \eqref{1.strong-dis} with the initial time $t=\tau$, we have
$$
\|\Nx u(T)\|^2\le \|\Nx u(\tau)\|^2+
(2\alpha\nu)^{-1}\|g\|^2\le CT^{-1}\(\|u(0)\|^2+\|g\|^2\).
$$
This and \eqref{1.dis} prove \eqref{1.smo}
and guarantee that the $\mathcal H^1$-ball
$$
\mathcal B_1:=\{u_0\in\mathcal H^1,\ \ \|u_0\|^2_{\mathcal H^1}\le R\}
$$
is a bounded  absorbing set for the
solution semigroup $S(t)$
if $R$ is large enough.
 Thus, $\mathcal A\subset\mathcal B_1$ is bounded
 in $\mathcal H^1$ and the proof is complete.
\end{proof}
\begin{remark} Arguing analogously, it is not difficult to show that the smoothness of the global attractor $\mathcal A$ is restricted by the smoothness of the external forces $g$. In particular, the attractor will be $C^\infty$-smooth (analytic) if the external forces are $C^\infty$-smooth (analytic).
\end{remark}

\section{Fractal dimension of the attractor: upper bounds}\label{s3}
In this section, we prove that the global attractor constructed
above has  finite fractal (box-counting) dimension and give
explicit upper bounds for this dimension in terms of the physical
parameters $\nu$ and $\alpha$ and the norms of the right-hand
side $g$ in homogeneous Sobolev spaces.  To this end, we first need to remind some
definitions, see \cite{temam,BV} for  more detailed exposition.

\begin{definition}\label{Def2.dim} Let $\mathcal A\subset\mathcal H$ be a compact set in a metric space $\mathcal H$. Then, by the Hausdorff criterion, for every $\varepsilon>0$ it can be covered by the finite number of $\varepsilon$-balls in $\mathcal H$. Let $N_\varepsilon(\mathcal A,\mathcal H)$ be the minimal number of such balls. Then, the fractal (box-counting) dimension of $\mathcal A$ in $\mathcal H$ is defined via the following expression:
$$
\dim_f(\mathcal A,\mathcal H):=\limsup_{\varepsilon\to0}\frac{\ln N_{\varepsilon}(\mathcal A,\mathcal H)}{\ln\frac1\varepsilon}.
$$
The fractal dimension coincides with the usual dimension if the set
$\mathcal A$ is regular enough (for instance, for the case
where $\mathcal A$ is a Lipschitz manifold in $\mathcal H$),
but may be non-integer for irregular sets (for instance,
for the standard ternary Cantor set $K\subset[0,1]$ this
dimension is $\frac{\ln2}{\ln3}$),
see, for instance,  \cite{robinson} for more details.
\end{definition}

We now estimate the dimension of the global attractor $\mathcal{A}$
by means of the so-called volume contraction method~\cite{CF85},
\cite{temam}. The solution semigroup $S_t$ is uniformly quasi-differentiable
on the attractor in the sense that there exists (for a fixed $t\ge0$) a linear bounded operator
$D S(t,u_0)$ such that
\begin{equation}\label{Differ}
\|S(t)u_1-S(t)u_0-D S(t,u_0)\cdot(u_1-u_0)\|\le h(\|u_1-u_0\|),
\end{equation}
where $h(r)/r\to0$ as $r\to0$, and $u_0,u_1\in\mathcal{A}$.

The quasi-differential $D S(t,u_0)$ is the solution operator
$\xi\to v(t)$ of the following equation of variations:
\begin{equation}\label{2.eq-var2}
\partial_t v=L(t,u_0)v:=-\Pi\bigl((v,\Nx)u(t)+(u(t),\Nx)v\bigr)-\alpha v+\nu\Dx v,\ v(0)=\xi,
\end{equation}
where $\Pi:[L^2(\mathbb R^2)]^2\to\mathcal H$ is
the Leray ortho-projection  onto the divergence free vector fields, and $u(t)=S(t)u_0$
is the solution lying on the attractor and parameterized by $u_0\in\mathcal{A}$.
For the proof see~\cite{BV1}, where it is also shown that the
solution semigroup is even  differentiable for all $u_0\in\mathcal{H}$ and
the differential $D S(t,u_0)$ depends continuously on the point $u_0$.

We define for $m=1,2\dots$ the numbers $q(m)$ (the sums of the first
$m$ global Lyapunov exponents)
$$
q(m)=\limsup_{t\to\infty}\ \sup_{u_0\in {\mathcal A}}\ \
\sup_{\{v_j\}_{j=1}^m\in\mathcal H^1}
\frac{1}t
\int_0^t \sum_{j=1}^m\bigl({ L}(\tau,u_0)v_j,v_j\bigr)d\tau,
$$
where the  supremum closest to the integral is taken with respect
to all $L^2$-orthonormal families   $\{v_j\}_{j=1}^m\in\mathcal H^1$.

To define $q(m)$ for all real $m\ge1$ we just linearly
interpolate between $q(m)$ and $q(m+1)$ so that $q(m)$
is now a piece-wise linear continuous function of $m$.

The following theorem is the key technical tool for estimating the dimension of the attractor via the so-called volume contraction method.
\begin{theorem}\label{Th2.vol-contr} Let $S(t):\mathcal H\to\mathcal H$ be the solution semigroup associated with problem \eqref{1.ns-main}
 in a Hilbert space $\mathcal H$ and let $\mathcal A$ be a compact invariant
 set of $S_t$ in $\mathcal H$: $S(T)\mathcal{A}=\mathcal{A}$.

 Suppose  that
the semigroup  $S(t)$ is uniformly quasi-differentiable for every fixed $t$ on $\mathcal A$
in the sense of~\eqref{Differ}.

Suppose further that the quasi-differential $D S(t,u_0)$
depends continuously on the initial point $u_0\in\mathcal{A}$ as
a map $D S(t,\cdot):u_0\to\mathcal{L}(\mathcal{H},\mathcal{H})$.

Suppose that there exists number $m>0$ such that $q(m)<0$. Then
$$
\dim_f\mathcal{A}<m.
$$
\end{theorem}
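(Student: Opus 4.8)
The plan is to translate the spectral hypothesis $q(m)<0$ into a uniform contraction of $m$-dimensional volumes along the variational flow \eqref{2.eq-var2}, and then to feed this contraction into the Constantin--Foias--Temam/Douady--Oester\-l\'e covering argument for the invariant set $\mathcal A$. Write $u(t)=S(t)u_0$ for the trajectory on $\mathcal A$, and for the solution operator $DS(t,u_0)$ of \eqref{2.eq-var2} denote by $\alpha_1(t,u_0)\ge\alpha_2(t,u_0)\ge\dots$ its singular values and by
$$
\omega_m(t,u_0):=\alpha_1(t,u_0)\cdots\alpha_m(t,u_0)=\sup_{\xi_1,\dots,\xi_m}\frac{\|DS(t,u_0)\xi_1\wedge\dots\wedge DS(t,u_0)\xi_m\|}{\|\xi_1\wedge\dots\wedge\xi_m\|}
$$
the maximal $m$-dimensional volume expansion factor. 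Interpolating geometrically, $\omega_d:=\omega_{\lfloor d\rfloor}^{\,\lceil d\rceil-d}\,\omega_{\lceil d\rceil}^{\,d-\lfloor d\rfloor}$, extends this to all real $d\ge1$, and I set $\bar\omega_d(t):=\sup_{u_0\in\mathcal A}\omega_d(t,u_0)$, which is finite by the assumed continuity of $DS(t,\cdot)$ on the compact set $\mathcal A$.

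The first key step is the Liouville (trace) formula. For fixed $u_0$ and $m$ independent data $\xi_j$, the evolved vectors $v_j(\tau)=DS(\tau,u_0)\xi_j$ obey
$$
\frac{d}{d\tau}\ln\|v_1(\tau)\wedge\dots\wedge v_m(\tau)\|=\operatorname{Tr}\bigl(L(\tau,u_0)\circ Q_m(\tau)\bigr),
$$
where $Q_m(\tau)$ is the orthogonal projection onto $\operatorname{span}\{v_1(\tau),\dots,v_m(\tau)\}$; since $\operatorname{Tr}(L\circ Q_m)=\sum_{j=1}^m(L\phi_j,\phi_j)$ for any orthonormal basis $\{\phi_j\}$ of the range of $Q_m$, integrating and maximizing over the data gives
$$
\ln\omega_m(t,u_0)\le\int_0^t\ \sup_{\{\phi_j(\tau)\}}\ \sum_{j=1}^m\bigl(L(\tau,u_0)\phi_j,\phi_j\bigr)\,d\tau,
$$
the integrand being exactly the one in the definition of $q(m)$ with a $\tau$-dependent orthonormal frame. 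Dividing by $t$, taking $\sup_{u_0\in\mathcal A}$ and $\limsup_{t\to\infty}$ yields $\limsup_{t\to\infty}\frac1t\ln\bar\omega_m(t)\le q(m)$. (The cocycle identity $DS(t+s,u_0)=DS(s,S(t)u_0)\circ DS(t,u_0)$ together with the invariance $S(t)\mathcal A=\mathcal A$ makes $\ln\bar\omega_m$ subadditive, so this $\limsup$ is an honest limit equal to its infimum.) As $q(m)<0$, there is therefore a time $t_\ast$ with $\bar\omega_m(t_\ast)<1$: the map $S(t_\ast)$ contracts $m$-dimensional volumes uniformly on $\mathcal A$. Since $m\mapsto q(m)$ is continuous, $q(d)<0$ for some $d<m$, whence also $\bar\omega_d(t_\ast)<1$; this slack is what will produce the \emph{strict} inequality.

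The second key step is the covering estimate. Cover $\mathcal A$ by $N_\varepsilon(\mathcal A,\mathcal H)$ balls of radius $\varepsilon$ centred on $\mathcal A$; by invariance the images $S(t_\ast)B(u_0,\varepsilon)$ again cover $\mathcal A$. Quasi-differentiability \eqref{Differ} shows that $S(t_\ast)$ maps $B(u_0,\varepsilon)$ into the $h(\varepsilon)$-neighbourhood of the ellipsoid $S(t_\ast)u_0+DS(t_\ast,u_0)B(0,\varepsilon)$, whose semi-axes are $\varepsilon\alpha_j(t_\ast,u_0)$. The elementary lemma on covering an ellipsoid by balls (see \cite{temam,CF85}) bounds the number of balls of radius comparable to $\varepsilon\alpha_{\lceil d\rceil}(t_\ast,u_0)$ needed to re-cover each image through the ratios $\alpha_j/\alpha_{\lceil d\rceil}$; because the cumulative product is governed by $\omega_d(t_\ast,u_0)\le\bar\omega_d(t_\ast)<1$, the resulting scale-dependent estimate for $N_\varepsilon$, iterated and inserted into Definition \ref{Def2.dim}, gives $\dim_f\mathcal A\le d<m$.

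The main obstacle is the bookkeeping in this last step. The remainder $h(\|u_1-u_0\|)$ in \eqref{Differ} must be controlled uniformly in $u_0\in\mathcal A$, and its effect on the ellipsoid-covering count shown to be negligible at \emph{all} sufficiently small scales simultaneously, so that the covering recursion holds with constants independent of $\varepsilon$; here the compactness of $\mathcal A$ and the continuity of $DS(t_\ast,\cdot)$ (via which $\bar\omega_d(t_\ast)$ is attained and uniform) are essential. Securing the strict inequality $\dim_f\mathcal A<m$, rather than merely $\le m$, is the other delicate point: it rests on exploiting the concavity of the interpolated function $q$ to guarantee a genuinely smaller exponent $d<m$ at which volumes are still uniformly contracted.
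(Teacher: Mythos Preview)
The paper does not actually prove this theorem: immediately after stating it, the authors write ``For the proof of this theorem see \cite{CF85}, \cite{temam} in the case of Hausdorff dimension and \cite{CI} for the fractal dimension.'' Your sketch is precisely the standard Douady--Oesterl\'e/Constantin--Foias--Temam argument carried out in those references---Liouville's trace formula to convert the hypothesis $q(m)<0$ into uniform $m$-volume contraction for some iterate $S(t_\ast)$, followed by the ellipsoid-covering recursion on $\mathcal A$---so in substance you have reproduced what the cited works do, and the outline is correct.

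One point worth flagging: as literally written in the paper, the supremum over orthonormal families $\{v_j\}$ in the definition of $q(m)$ sits \emph{outside} the time integral, i.e.\ over $\tau$-independent frames, whereas Liouville's formula naturally produces the $\tau$-dependent projection $Q_m(\tau)$ onto the span of the evolved vectors. The quantity you actually bound from above is therefore
\[
\limsup_{t\to\infty}\frac1t\sup_{u_0\in\mathcal A}\int_0^t\sup_{\{\phi_j\}}\sum_{j=1}^m\bigl(L(\tau,u_0)\phi_j,\phi_j\bigr)\,d\tau,
\]
with the inner sup taken pointwise in $\tau$; this a priori dominates the paper's $q(m)$ rather than being dominated by it. In the cited references---and in the paper's own Proposition~\ref{Prop1.trace}---the trace is in fact estimated pointwise in $\tau$, so both versions of $q(m)$ receive the same upper bound and the discrepancy is harmless in the application. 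But your line ``the integrand being exactly the one in the definition of $q(m)$'' glosses over this, and the inequality $\limsup_{t\to\infty}\frac1t\ln\bar\omega_m(t)\le q(m)$ really uses the stronger pointwise-sup variant, which is the one actually defined in \cite{temam} and \cite{CI}.
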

For the proof of this theorem see \cite{CF85}, \cite{temam} in the case of Hausdorff dimension and \cite{CI} for the fractal dimension.

\begin{remark}
The condition on the continuity of the quasi-differentials with respect to  the initial point
is redundant in the case of the Hausdorff dimension.
It is also redundant in the case of the fractal dimension  if the graph of
$q(m)$ lies below the straight line joining the
points $(m-1,q(m-1))$ and $(m,q(m))$, where   $q(m)<0$ and  $q(m-1)\ge0$, see
\cite{Ch-V-book,CI01}.

Also, in applications to infinite dimensional dissipative dynamical
systems an upper bound for  $q(m)$ is usually found in the
form
$$
q(m)\le -c_1m^{\gamma}+c_2,\quad\gamma\ge1.
$$
For example, as we shall shortly see, $\gamma=1$ in our case. In this case,
also without the continuity condition, we have
$$
\dim_f\mathcal{A}\le (c_2/c_1)^{1/\gamma}.
$$
\end{remark}

To apply this theorem for obtaining the fractal dimension of the attractor $\mathcal A$ of
the semigroup $S(t)$ generated by damped Navier-Stokes equation
\eqref{1.ns-main} we need to state the Lieb-Thirring inequality which plays a fundamental role in estimating the quantities $q(m)$.

\begin{lemma}[{\bf Lieb--Thirring inequality}]\label{T:L-T}
Let
$\{v_j\}_{j=1}^m\in {H}^1(\mathbb{R}^2)^2$
 be a family of orthonormal  vector-functions and let
$\divv v_j=0$.
 Then the following inequality holds for
 $\rho(x)=\sum_{k=1}^m|v_k(x)|^2$:
\begin{equation}\label{L-T}
\|\rho\|^2=
\int_{\mathbb{R}^2}\biggl(\sum_{j=1}^m|v_j(x)|^2\biggr)^2dx \le
c_\mathrm{LT}
\sum_{j=1}^m \| \Nx v_j\|^2,\quad
c_\mathrm{LT}
\le\frac1{2\sqrt{3}}\,.
\end{equation}
\end{lemma}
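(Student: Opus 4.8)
The plan is to derive the density form \eqref{L-T} from the dual, ``sum of negative eigenvalues'' form of the Lieb--Thirring inequality for a Schr\"odinger operator, and then to use the constraint $\divv v_j=0$ to sharpen the constant. First I would dualize. By the elementary Legendre-transform argument, \eqref{L-T} is equivalent to the assertion that for every potential $V\ge0$ and every $L^2$-orthonormal divergence-free family $\{v_j\}$,
\[
\sum_{j=1}^m\|\Nx v_j\|^2-\int_{\mathbb R^2}V\rho\,dx\ \ge\ -\frac{c_\mathrm{LT}}4\int_{\mathbb R^2}V^2\,dx ,
\]
since testing this with $V=c\rho\ge0$ and optimizing in $c>0$ returns \eqref{L-T} with the same constant. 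Because $-\Dx$ commutes with the Leray projection $\Pi$, the left-hand side equals $\sum_j\bigl((-\Dx-V)v_j,v_j\bigr)$, which by the variational (min--max) principle is bounded below by the sum of the negative eigenvalues of $\Pi(-\Dx-V)\Pi$ on the subspace of divergence-free fields. Thus everything reduces to a bound $\sum_\alpha|\lambda_\alpha^-|\le \tfrac{c_\mathrm{LT}}4\int V^2\,dx$ for that operator.

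Second, I would exploit the constraint $\divv v_j=0$, which is precisely what yields a constant below the generic, unconstrained one. Introducing stream functions via $v_j=\Nx^\perp\psi_j$, the identity $\|\Nx v_j\|^2=\|\Dx\psi_j\|^2$ holds (it follows from $\|\Nx v\|^2=\|\divv v\|^2+\|\rot v\|^2$, valid for fields decaying at infinity), the $L^2$-orthonormality of $\{v_j\}$ becomes $\dot H^1$-orthonormality of $\{\psi_j\}$, and $\rho=\sum_j|\Nx\psi_j|^2$. In this way the family is effectively reduced from two vector components to a single scalar potential, roughly halving the available degrees of freedom and hence the spectral count of the associated operator; this is the structural fact responsible for the gain over the generic two-component constant.

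Third, I would pin down the explicit value $c_\mathrm{LT}\le\tfrac1{2\sqrt3}$. For this I would pass, in the spirit of the Laptev--Weidl dimensional reduction, from the two-dimensional eigenvalue sum to a one-dimensional problem with operator-valued potential, and invoke a sharp one-dimensional Lieb--Thirring / collective Sobolev inequality of Eden--Foias type, whose extremizer is explicit and is the origin of the factor $\sqrt3$; optimizing the constants through the reduction then gives the stated bound. The routine parts here are the duality step, the commutation with $\Pi$, and the orthonormality bookkeeping. The genuine difficulty — and the only place where the precise numeric value is decided — is this last step: extracting the \emph{explicit} sharp constant from the one-dimensional reduction rather than merely a finite one. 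I would expect essentially all of the effort to concentrate there, and, should a fully self-contained sharp one-dimensional computation prove too delicate, I would fall back on citing the explicit divergence-free Lieb--Thirring constant established in the authors' earlier work.
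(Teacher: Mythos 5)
Your proposal is correct and follows essentially the same route as the paper: the paper's proof of Lemma~\ref{T:L-T} is precisely a reduction to the scalar case exploiting the divergence-free (stream-function) structure available in two dimensions, citing \cite{CI} and \cite{I-Stokes}, combined with the main result of \cite{D-L-L}, whose proof is exactly the Laptev--Weidl dimensional reduction plus the Eden--Foias-type sharp one-dimensional bound that you identify as the origin of the factor $\sqrt{3}$. Your duality step, the stream-function reduction with $\rho=\sum_j|\Nx\psi_j|^2$ and $\dot H^1$-orthonormality, and your fallback of citing the authors' earlier explicit divergence-free constant are in substance what the paper (and the works it cites) do.
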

\begin{proof}
The proof (see~\cite{I-Stokes}) is a reduction to the scalar case
(which works in two dimensions) \cite{CI} and the
use of the main result in ~\cite{D-L-L}.
\end{proof}

 We first set $\nu=1$ and $\alpha=1$ and begin
with estimating the $m$-trace of the operator $L(t,u_0)$
in the equation of variations~\eqref{2.eq-var2} for this particular case. 
The general case will be reduced later to the case $\nu=\alpha=1$ by the proper scaling.

\begin{proposition}\label{Prop1.trace} Let $\nu=1$ and $\alpha=1$. Then, the following estimate holds:
\begin{multline}\label{2.tr-est}
\limsup_{T\to\infty}\sup_{u_0\in\mathcal A}\sup_{\{v_j\}_{j=1}^m\in\mathcal H^1}
\frac1T\int_0^T
\sum_{j=1}^m\bigl({ L}(t,u_0)v_j,v_j\bigr)\,dt\le
\\\le-m +\frac{1}{16\sqrt{3}} \,\limsup_{T\to\infty}\sup_{u_0\in\mathcal A}\frac1T
\int_0^T\|\Nx u(t)\|^2\,dt,
\end{multline}
where $u(t)=S(t)u_0$.
\end{proposition}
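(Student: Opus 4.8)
The plan is to deduce \eqref{2.tr-est} from a single pointwise-in-time bound on the trace $\sum_{j=1}^m\bigl(L(t,u_0)v_j,v_j\bigr)$ that is uniform in $t$, in $u_0\in\mathcal A$, and in the chosen $L^2$-orthonormal family, after which the factor $\tfrac1T\int_0^T$, the two suprema and the $\limsup$ all pass through trivially. First I would split the trace into the four pieces coming from \eqref{2.eq-var2}. Since every $v_j$ is divergence free, the Leray projection satisfies $\Pi v_j=v_j$ and is self-adjoint, so $(-\Pi w,v_j)=-(w,v_j)$ for any $w$. Hence the advection piece $(u,\Nx)v_j$ contributes $-((u,\Nx)v_j,v_j)$, which vanishes identically by the orthogonality relation \eqref{1.null}; the damping piece $-\alpha v_j$ contributes $-\|v_j\|^2=-1$ (as $\alpha=1$ and the family is orthonormal), summing to $-m$; and the viscous piece $\nu\Dx v_j$ contributes $-\|\Nx v_j\|^2$ (as $\nu=1$). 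Setting $Y:=\sum_{j=1}^m\|\Nx v_j\|^2$, this leaves
\begin{equation*}
\sum_{j=1}^m\bigl(L(t,u_0)v_j,v_j\bigr)=-\sum_{j=1}^m((v_j,\Nx)u,v_j)-m-Y,
\end{equation*}
so the whole problem reduces to controlling the single stretching term $\sum_j((v_j,\Nx)u,v_j)$.

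The decisive step, and the place where the constant $\tfrac1{16\sqrt3}$ (rather than the naive $\tfrac1{8\sqrt3}$) is earned, is the treatment of this stretching term. Introducing the matrix density $\rho_{ik}(x):=\sum_{j=1}^m v_j^i(x)v_j^k(x)$, which is symmetric and positive semi-definite with trace $\operatorname{tr}\rho=\rho(x)=\sum_j|v_j(x)|^2$, I would rewrite
\begin{equation*}
\sum_{j=1}^m((v_j,\Nx)u,v_j)=\int_{\mathbb R^2}\sum_{i,k}\partial_{x_i}u^k\,\rho_{ik}\,dx=\int_{\mathbb R^2}\operatorname{tr}\bigl(S(x)\rho(x)\bigr)\,dx,
\end{equation*}
where only the symmetric part $S=\tfrac12\bigl(\Nx u+(\Nx u)^{\top}\bigr)$ survives the contraction against the symmetric $\rho$. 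The point is that $S$ is \emph{traceless}, because $\divv u=0$, so as a symmetric $2\times2$ matrix it has eigenvalues $\pm\sigma$ and therefore $\|S\|_{\mathrm{op}}=\sigma=\tfrac1{\sqrt2}\|S\|_F\le\tfrac1{\sqrt2}|\Nx u|$. Combining this with the duality bound $|\operatorname{tr}(S\rho)|\le\|S\|_{\mathrm{op}}\operatorname{tr}\rho$ valid for positive semi-definite $\rho$ gives the gain of $\tfrac1{\sqrt2}$, namely
\begin{equation*}
\Bigl|\sum_{j=1}^m((v_j,\Nx)u,v_j)\Bigr|\le\frac1{\sqrt2}\int_{\mathbb R^2}|\Nx u|\,\rho\,dx.
\end{equation*}

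Next I would apply the Cauchy--Schwarz inequality and the Lieb--Thirring inequality \eqref{L-T}, $\|\rho\|^2\le c_{\mathrm{LT}}\,Y$, to bound the right-hand side by $\tfrac1{\sqrt2}\|\Nx u\|\sqrt{c_{\mathrm{LT}}}\,\sqrt Y$. Inserting this into the trace identity and maximizing $\tfrac1{\sqrt2}\|\Nx u\|\sqrt{c_{\mathrm{LT}}}\,\sqrt Y-Y$ over $Y\ge0$, whose optimal value is $\tfrac{c_{\mathrm{LT}}}{8}\|\Nx u\|^2$, yields the uniform pointwise bound
\begin{equation*}
\sum_{j=1}^m\bigl(L(t,u_0)v_j,v_j\bigr)\le -m+\frac{c_{\mathrm{LT}}}{8}\|\Nx u(t)\|^2\le -m+\frac1{16\sqrt3}\|\Nx u(t)\|^2,
\end{equation*}
using $c_{\mathrm{LT}}\le\tfrac1{2\sqrt3}$. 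Finally, dividing by $T$, integrating over $[0,T]$, and taking the suprema over the orthonormal family and over $u_0\in\mathcal A$ followed by $\limsup_{T\to\infty}$ produces exactly \eqref{2.tr-est}, since the constant $-m$ is unaffected and the stretching bound is independent of the chosen family.

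I expect the only real obstacle to be the sharp handling of the stretching term: the naive estimate $|((v_j,\Nx)u,v_j)|\le\int|\Nx u|\,|v_j|^2$ loses the factor $\tfrac1{\sqrt2}$ and produces the wrong constant, so one must exploit the trace-free symmetric structure of $\Nx u$ — equivalently the identity $\|S\|_{\mathrm{op}}=\tfrac1{\sqrt2}\|S\|_F$ special to trace-free symmetric $2\times2$ matrices — together with the bound $|\operatorname{tr}(S\rho)|\le\|S\|_{\mathrm{op}}\operatorname{tr}\rho$ for positive semi-definite $\rho$. Everything else (the vanishing of the advection contribution via \eqref{1.null}, the diagonal damping and viscous contributions, the Young optimization in $Y$, and the passage of the averages and suprema through the pointwise inequality) is routine.
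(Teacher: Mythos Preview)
Your proof is correct and follows essentially the same approach as the paper: split the trace, drop the advection term via \eqref{1.null}, gain the factor $\tfrac1{\sqrt2}$ from the trace-free structure of $\Nx u$, and finish with Lieb--Thirring plus the Young optimization. The only cosmetic difference is that the paper proves the $\tfrac1{\sqrt2}$ gain as a pointwise inequality $|(Av,v)|\le 2^{-1/2}|A|\,|v|^2$ for a single vector $v$ and a trace-free matrix $A$ via a direct coordinate computation, whereas you aggregate first into the positive semi-definite density $\rho_{ik}$ and invoke $\|S\|_{\mathrm{op}}=\tfrac1{\sqrt2}\|S\|_F$; the two formulations are equivalent.
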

\begin{proof} Let $v_1,\dots,v_m\in\mathcal H^1$ be an orthonormal family in $\mathcal H$. 
Then, integrating by parts and using that the vector fields $v_i$ are divergence free, we get
$$
\allowdisplaybreaks
\aligned
&\sum_{j=1}^m(L(u(t))v_j,v_j)=\\&=-\sum_{j=1}^m\|\Nx v_j\|^2-
\sum_{j=1}^m\|v_j\|^2
-\int_{\mathbb{R}^2}\sum_{j=1}^m\sum_{i,k=1}^2v_j^k\partial_{x_k}u^iv_j^i\,dx=\\&=
-m-\sum_{i=1}^m\|\Nx v_i\|^2-\int_{\mathbb{R}^2}\sum_{j=1}^m\sum_{i,k=1}^2v_j^k\partial_{x_k}u^iv_j^i\,dx.
\endaligned
$$
To estimate the right-hand side above, we use the following point-wise inequality
\begin{equation}\label{strange}
\big|\sum_{k,i=1}^2
v^k\partial_{x_k}u^iv^i\big|\le
2^{-1/2}|\Nx u||v|^2
\end{equation}
which holds for any $v=(v^1,v^2)\in\mathbb R^2$ and any Jacobi matrix $\Nx u=\(\partial_{x_i}u^j\)_{i,j=1}^2\in\mathbb R^4$ such that $\partial_{x_1}u^1+\partial_{x_2}u^2=0$, see \cite[Lemma~4.1]{CI}. Indeed,
setting
$$
v=(\xi,\eta),\qquad \Nx u=A:=\left(
                          \begin{array}{cr}
                            a & b \\
                            c & -a \\
                          \end{array}
                        \right),
$$
we have by the Cauchy-Schwartz inequality
$$
\aligned
&(Av,v)=a(\xi^2-\eta^2)+(b+c)\xi\eta=a(\xi^2-\eta^2)+((b+c)/2)\,2\xi\eta\le\\\le
&\sqrt{a^2+(b+c)^2/4}\,\sqrt{(\xi^2-\eta^2)^2+4\xi^2\eta^2}=
\sqrt{a^2+(b+c)^2/4}\cdot |v|^2\le\\\le
&\sqrt{a^2+(b^2+c^2)/2}\cdot|v|^2=\frac1{\sqrt{2}}\sqrt{a^2+b^2+c^2+a^2}\cdot|v|^2,
\endaligned
$$
and estimate \eqref{strange} is proved. Using this pointwise estimate and
the Lieb--Thirring inequality for
divergence free vector fields~\eqref{L-T}, we finally have
$$
\aligned
&\sum_{j=1}^m(L(u(t))v_j,v_j)\le-m-\sum_{i=1}^m\|\Nx v_i\|^2+\frac1{\sqrt2}\int_{\mathbb R^2}\rho(x)|\Nx u(x)|\,dx\le\\
\le& -m-\sum_{i=1}^m\|\Nx v_i\|^2+\frac1{\sqrt{2}}\|\rho\|\|\Nx u(t)\|\le\\\le
&-m-\sum_{i=1}^m\|\Nx v_i\|^2+\frac1{\sqrt{2}}\biggl(c_\mathrm{LT}
\sum_{j=1}^m \| \Nx v_j\|^2\biggr)^{1/2}\|\Nx u(t)\|\le\\\le
&-m+\frac{c_\mathrm{LT}}8\|\Nx u(t)\|^2=
-m+\frac1{16\sqrt{3}}\|\Nx u(t)\|^2.
\endaligned
$$
Integrating this inequality for $t\in[0,T]$ and taking the supremum over all $u_0\in\mathcal A$,
 we obtain \eqref{2.tr-est} and finish the proof of the proposition.
\end{proof}
\begin{theorem}\label{Cor2.dim} Let the assumptions of Theorem \ref{Th1.main} hold
and let $\nu=\alpha=1$. Then the fractal dimension of the global attractor
$\mathcal A$ of problem \eqref{1.ns-main} satisfies the following estimate:
\begin{equation}\label{2.dim-nu}
\dim_f\mathcal A\le\frac{1}{16\sqrt{3}}
\limsup_{T\to\infty}\frac1T\sup_{u_0\in\mathcal A}\int_0^T\|\Nx u(t)\|^2\,dt,
\end{equation}
where $u(t)=S(t)u_0$.
\end{theorem}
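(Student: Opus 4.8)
The plan is to read off the conclusion almost directly from the trace estimate of Proposition~\ref{Prop1.trace} combined with the volume contraction criterion of Theorem~\ref{Th2.vol-contr}; all the genuine analysis has already been done, so what remains is an assembly. The first observation I would make is that the triple $\sup/\limsup$ on the left-hand side of \eqref{2.tr-est} coincides \emph{verbatim} with the definition of $q(m)$, the sum of the first $m$ global Lyapunov exponents, given just before Theorem~\ref{Th2.vol-contr}: the orders of $\limsup_{T\to\infty}$, $\sup_{u_0}$ and $\sup_{\{v_j\}}$ match, and the extra friction/viscosity term $-m$ is independent of the $v_j$. Hence Proposition~\ref{Prop1.trace} is nothing but the bound
$$
q(m)\le -m+\frac1{16\sqrt3}\,\Lambda,\qquad \Lambda:=\limsup_{T\to\infty}\sup_{u_0\in\mathcal A}\frac1T\int_0^T\|\Nx u(t)\|^2\,dt,
$$
valid for every positive integer $m$, and, since the right-hand side is affine in $m$, for every real $m\ge1$ after the piecewise-linear interpolation used to define $q(m)$.

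Next I would verify that the hypotheses of Theorem~\ref{Th2.vol-contr} are in force. The set $\mathcal A$ is compact and strictly invariant by Theorem~\ref{Th1.attr}; the uniform quasi-differentiability \eqref{Differ} and the continuous dependence of the quasi-differential $DS(t,u_0)$ on $u_0$ are precisely the properties recorded, with reference to \cite{BV1}, in the discussion preceding the equation of variations \eqref{2.eq-var2}. The only remaining input the theorem demands is the existence of some $m$ with $q(m)<0$, and the displayed bound supplies this as soon as $m>\frac1{16\sqrt3}\Lambda$.

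Finally I would invoke the contraction theorem. The estimate for $q(m)$ is exactly of the form $q(m)\le -c_1m^\gamma+c_2$ with $\gamma=1$, $c_1=1$ and $c_2=\frac1{16\sqrt3}\Lambda$, so by the Remark following Theorem~\ref{Th2.vol-contr} — which in this affine case dispenses even with the continuity requirement — one obtains
$$
\dim_f\mathcal A\le\Big(\tfrac{c_2}{c_1}\Big)^{1/\gamma}=\frac1{16\sqrt3}\,\Lambda,
$$
which is exactly \eqref{2.dim-nu}. Equivalently and more elementarily: $q(m)<0$ for every $m>\frac1{16\sqrt3}\Lambda$ gives $\dim_f\mathcal A<m$ for all such $m$, and letting $m$ decrease to $\frac1{16\sqrt3}\Lambda$ yields the same bound.

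There is no serious obstacle left at this stage: the pointwise inequality \eqref{strange}, the divergence-free Lieb--Thirring inequality \eqref{L-T}, and the Cauchy--Schwarz/Young optimization were all dispatched inside Proposition~\ref{Prop1.trace}. The only point requiring a moment's care is (ii) above, namely that the affine bound on $q(m)$, established for integer $m$, persists for all real $m\ge1$ so that the criterion may be applied at a non-integer threshold. I would also note in passing that $\Lambda$ is finite, which makes the estimate meaningful: the second dissipative estimate in \eqref{1.dis} with $\nu=\alpha=1$ gives $\frac1T\int_0^T\|\Nx u\|^2\,dt\le\frac12\|g\|^2+\frac1{2T}\|u(0)\|^2$, so that $\Lambda\le\frac12\|g\|^2$ on the bounded attractor.
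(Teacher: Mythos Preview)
Your proposal is correct and follows exactly the approach the paper takes: the paper's own proof is the single sentence ``This estimate is a corollary of Theorem~\ref{Th2.vol-contr} and estimate~\eqref{2.tr-est},'' and you have simply unpacked that sentence by identifying the left-hand side of \eqref{2.tr-est} with $q(m)$, checking the hypotheses of the volume contraction theorem, and applying the affine-case Remark. Your additional care about the piecewise-linear extension to non-integer $m$ and the finiteness of $\Lambda$ are details the paper leaves implicit.
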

\begin{proof}
This estimate is a corollary of Theorem \ref{Th2.vol-contr} and estimate \eqref{2.tr-est}.
\end{proof}
Thus, we only need to estimate the integral in the RHS of \eqref{2.dim-nu}.
We assume below that the right-hand side $g$ belongs to
 the homogeneous Sobolev space $\dot H^s(\mathbb R^2)=(-\Dx)^{-s/2}L^2(\mathbb R^2)$, $s\in\mathbb R$
with norm:
\begin{equation}\label{2.hom}
\|u\|^2_{\dot H^s}:=\int_{\mathbb R^2}|\xi|^{2s}|\hat u(\xi)|^2\,d\xi,
\end{equation}
where $$\hat u(\xi):=\frac1{2\pi}\int_{\mathbb R^2}u(x)e^{- i\xi x}\,dx$$ is the
 Fourier transform of $u$, see \cite{triebel} for more details. Then, obviously,
\begin{equation}\label{2.hom-sp}
\|u\|_{\dot H^0}=\|u\|,\ \ \|u\|_{\dot H^1}=\|\Nx u\|,
\ \ \|u\|_{\dot H^2}=\|\Dx  u\|,
\end{equation}
and the following interpolation inequalities immediately follow from
definition~\eqref{2.hom} and the Cauchy--Schwartz inequality:
\begin{equation}\label{2.hom-int}
\aligned
&\|u\|_{\dot H^s}\le \|u\|^{1-s}\|\Nx u\|^s,\\
&\|\Dx u\|_{\dot H^{-s}}=\|\Nx u\|_{\dot H^{1-s}}\le
\|\Nx u\|^{s}\|\Dx u\|^{1-s},\ \ s\in[0,1].
\endaligned
\end{equation}
\begin{corollary}\label{Cor2.dim-est} Let the assumptions of
Theorem \ref{Th1.main} hold and let, in addition,
$g\in\dot H^{-s}$ for some $s\in[0,1]$ and $\nu=\alpha=1$.
Then the fractal dimension of the attractor $\mathcal A$
of problem \eqref{1.ns-main} satisfies   the following estimate:
\begin{equation}\label{2.dim-attr-s}
\dim_f \mathcal A\le \frac{1-s^2}{64\sqrt{3}}\(\frac{1+s}{1-s}\)^s\|g\|^2_{\dot H^{-s}}.
\end{equation}
\end{corollary}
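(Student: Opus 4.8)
The plan is to feed the dimension bound of Theorem~\ref{Cor2.dim} with an averaged form of the energy equality. Thus everything reduces to controlling
$$
\mathcal E:=\limsup_{T\to\infty}\frac1T\sup_{u_0\in\mathcal A}\int_0^T\|\Nx u(t)\|^2\,dt .
$$
Setting $\nu=\alpha=1$ in the energy equality \eqref{1.energy} and integrating over $[0,T]$ gives, for each trajectory $u(t)=S(t)u_0$ on $\mathcal A$,
$$
\frac1T\int_0^T\(\|\Nx u\|^2+\|u\|^2\)\,dt=\frac1T\int_0^T(g,u)\,dt+\frac{\|u(0)\|^2-\|u(T)\|^2}{2T}.
$$
Since $\mathcal A$ is a bounded invariant set (bounded even in $\mathcal H^1$ by \eqref{1.dis}), the boundary term is $O(1/T)$ uniformly in $u_0$ and drops out in the limit. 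So the task becomes to bound the averaged value of $(g,u)$ from above in terms of $\|g\|_{\dot H^{-s}}$.

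Second, I would estimate $(g,u)$ pointwise in $t$. By duality between $\dot H^{-s}$ and $\dot H^{s}$ together with the interpolation inequality from \eqref{2.hom-int},
$$
(g,u)\le\|g\|_{\dot H^{-s}}\,\|u\|_{\dot H^s}\le\|g\|_{\dot H^{-s}}\,\|u\|^{1-s}\|\Nx u\|^{s}.
$$
To this triple product I apply the weighted Young inequality with the three conjugate exponents $2$, $\tfrac2{1-s}$, $\tfrac2s$ (whose reciprocals sum to $1$), inserting free weights $\mu_1\mu_2\mu_3=1$, which yields
$$
(g,u)\le\tfrac12\mu_1^2\|g\|_{\dot H^{-s}}^2+\tfrac{1-s}2\mu_2^{2/(1-s)}\|u\|^2+\tfrac s2\mu_3^{2/s}\|\Nx u\|^2 .
$$
Here $\|u\|^2$ and $\|\Nx u\|^2$ occur with exactly the powers already present on the left of the averaged identity. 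Choosing $\mu_2$ so that the coefficient of $\|u\|^2$ equals $1$ makes the two $\|u\|^2$ contributions cancel \emph{before} taking any limit (avoiding $\limsup$ subtleties), while the coefficient $\theta:=\tfrac s2\mu_3^{2/s}<1$ of $\|\Nx u\|^2$ is absorbed, leaving
$$
(1-\theta)\,\mathcal E\le\tfrac12\mu_1^2\,\|g\|^2_{\dot H^{-s}},\qquad \mu_1^2=\(\tfrac{1-s}2\)^{1-s}\(\tfrac{s}{2\theta}\)^{s}.
$$

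The final and only delicate step is to optimize the constant over the remaining parameter $\theta\in(0,1)$, i.e.\ to maximize $(1-\theta)\theta^{s}$; this gives $\theta=\tfrac{s}{1+s}$, and substituting collapses the prefactor to
$$
\mathcal E\le\frac{(1-s)^{1-s}(1+s)^{1+s}}{4}\,\|g\|^2_{\dot H^{-s}}=\frac{1-s^2}{4}\(\frac{1+s}{1-s}\)^{s}\|g\|^2_{\dot H^{-s}} .
$$
Multiplying by $\tfrac1{16\sqrt3}$ as in Theorem~\ref{Cor2.dim} gives precisely \eqref{2.dim-attr-s}. The endpoints are consistent with the limits of the formula: $s=0$ follows directly from $(g,u)\le\|g\|\,\|u\|$, and $s=1$ from $(g,u)\le\|g\|_{\dot H^{-1}}\|\Nx u\|$ combined with Cauchy--Schwarz in time. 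I expect the main obstacle to be the bookkeeping in the three-parameter Young inequality and checking that the optimal $\theta$ reproduces exactly the stated sharp constant rather than a larger one.
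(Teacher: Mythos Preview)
Your argument is correct and coincides with the paper's proof: same starting point (the integrated energy equality \eqref{1.energy}), same duality/interpolation estimate for $(g,u)$, same three-term Young inequality with a multiplicative constraint on the weights, same choice of the weight in front of $\|u\|^2$ to force exact cancellation, and the same final one-parameter optimization (your $\theta$ is the paper's $\tfrac{s}{2}x^{1/s}$ with $x=\delta^2$). One cosmetic slip: the boundedness of $\mathcal A$ you need for the $O(1/T)$ boundary term is only the $L^2$ bound, which indeed follows from \eqref{1.dis}; the $\mathcal H^1$ boundedness you mention is Corollary~\ref{Cor1.smo}, not \eqref{1.dis}, but it is not required here.
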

\begin{proof} From the energy equality \eqref{1.energy} with $\nu=\alpha=1$, interpolation inequalities \eqref{2.hom-int} and the Young inequality, we get
\begin{multline*}
\frac12\frac d{dt}\|u\|^2_{L^2}+\|u\|^2_{L^2}+\|\Nx u\|^2_{L^2}=
(g,u)\le\|g\|_{\dot H^{-s}}\|u\|_{\dot H^s}\le\\\le \|g\|_{\dot H^{-s}}\|\Nx u\|^s_{L^2}\|u\|^{1-s}_{L^2}\le
\frac{\varepsilon^2}2\|g\|^2_{\dot H^{-s}}+\frac{\delta^p}p\|\Nx u\|^2_{L^2}+\frac{\gamma^q}q\|u\|^2_{L^2},
\end{multline*}
where $p=\frac2{s}$, $q=\frac2{1-s}$ and positive numbers $\varepsilon,\delta,\gamma$ are such that $\varepsilon\delta\gamma=1$. Fixing now the parameter $\gamma$ in such way that
$$
\frac{\gamma^q}q=1\ \ \Rightarrow \ \ \gamma=\(\frac{1-s}2\)^{-\frac{1-s}2},
$$
and integrating over $t\in[0,T]$, we arrive at
$$
\frac1{2T}\(\|u(t)\|^2-\|u(0)\|^2\)+\(1-\frac{\delta^p}p\)\frac1T\int_0^T\|\Nx u(t)\|^2
\,dt\le \frac{\varepsilon^2}2\|g\|_{\dot H^{-s}}^2.
$$
From the dissipative estimate \eqref{1.dis} we conclude that
$$
\lim_{T\to\infty}\sup_{u_0\in\mathcal A}\frac1{2T}\(\|u(t)\|^2-\|u(0)\|^2\)=0
$$
and, therefore,
\begin{equation}\label{2.huge}
\limsup_{T\to\infty}\frac1T\sup_{u_0\in\mathcal A}\int_0^T\|\Nx u(t)\|^2
\,dt\le \frac{\varepsilon^2}2\(1-\frac{\delta^p}p\)^{-1}\|g\|_{\dot H^{-s}}^2
\end{equation}
and we only need to optimize  the coefficient in the RHS with respect
to $\varepsilon$ and $\delta$. Indeed, since $\varepsilon\delta\gamma=1$, we conclude that
$$
\varepsilon\delta=\(\frac{1-s}2\)^{\frac{1-s}2}\ \Rightarrow\ \varepsilon=\delta^{-1}\(\frac{1-s}2\)^{\frac{1-s}2}
$$
and
$$
\frac{\varepsilon^2}2\(1-\frac{\delta^p}p\)^{-1}=
\frac12\(\frac{1-s}2\)^{1-s}\frac1{x\left(1-\frac s2 x^{\frac1s}\right)},
$$
where $x:=\delta^2$. Thus, it only remains to maximize the function
$$
f(x):=x\left(1-\frac s2 x^{\frac1s}\right)
$$
on the interval $x\ge0$:
$$
f'(x)=1-\frac{s+1}2x^{\frac1s}=0  \Rightarrow
x=\(\frac2{s+1}\)^s \Rightarrow f(x)=\frac1{s+1}\(\frac2{s+1}\)^s.
$$
Inserting the obtained estimates into the right-hand side of \eqref{2.huge}, we finally get

$$
\limsup_{T\to\infty}\frac1T\sup_{u_0\in\mathcal A}\int_0^T\|\Nx u(t)\|^2\,dt\le \frac{1-s^2}4\(\frac{1+s}{1-s}\)^s\|g\|_{\dot H^{-s}}^2.
$$
This estimate together with \eqref{2.dim-nu} completes the proof of the corollary.
\end{proof}
The next corollary gives the analogous result for  $g\in\dot H^s$ with  $s\ge0$.
\begin{corollary}\label{Cor2.dim-est1} Let the assumptions of
Theorem~\ref{Th1.main} hold and let, in addition, $g\in\dot H^{s}$
for some $s\in[0,1]$ and $\nu=\alpha=1$. Then the fractal dimension
of the attractor $\mathcal A$ of problem \eqref{1.ns-main} satisfies
 the following estimate:
\begin{equation}\label{2.dim-attr-s1}
\dim_f\mathcal A\le \frac{1-s^2}{64\sqrt{3}}\(\frac{1+s}{1-s}\)^s\|g\|^2_{\dot H^{s}}.
\end{equation}
\end{corollary}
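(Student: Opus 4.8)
The plan is to repeat the argument of Corollary~\ref{Cor2.dim-est} one derivative higher: in place of the energy equality~\eqref{1.energy} I would use the enstrophy equality~\eqref{1.strong-en} (with $\nu=\alpha=1$). This is legitimate because the attractor is bounded in $\mathcal H^1$ by Corollary~\ref{Cor1.smo}, so every complete trajectory $u\in\mathcal K$ is a strong solution and~\eqref{1.strong-en} holds along it. The guiding observation is that the left-hand side of~\eqref{1.strong-en} has exactly the same algebraic shape as that of~\eqref{1.energy} under the formal substitution $\|u\|\mapsto\|\Nx u\|$, $\|\Nx u\|\mapsto\|\Dx u\|$, with the damping term $\alpha\|u\|^2$ now played by the viscous term $\nu\|\Dx u\|^2$; the only real change is that the pairing of $g\in\dot H^s$ with $\Dx u$ must be treated by the \emph{second} interpolation inequality in~\eqref{2.hom-int} instead of the first.

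Concretely, I would first estimate the forcing term by duality and interpolation,
$$
(g,\Dx u)\le\|g\|_{\dot H^s}\|\Dx u\|_{\dot H^{-s}}\le\|g\|_{\dot H^s}\|\Nx u\|^{s}\|\Dx u\|^{1-s},
$$
using~\eqref{2.hom-int}, and then apply the three-term Young inequality with the \emph{same} exponents $p=\frac2s$, $q=\frac2{1-s}$ and the same auxiliary constants $\varepsilon,\delta,\gamma$ subject to $\varepsilon\delta\gamma=1$ as in Corollary~\ref{Cor2.dim-est}. Inserting the result into~\eqref{1.strong-en} and choosing $\gamma$ exactly as before, so that $\gamma^q/q=1$, should make the coefficient of $\|\Dx u\|^2$ on the two sides cancel identically; what survives is
$$
\frac12\frac d{dt}\|\Nx u\|^2+\Bigl(1-\tfrac{\delta^p}p\Bigr)\|\Nx u\|^2\le\frac{\varepsilon^2}2\|g\|^2_{\dot H^s},
$$
which is verbatim the reduced inequality of Corollary~\ref{Cor2.dim-est} with $\|u\|$ replaced by $\|\Nx u\|$ and $\dot H^{-s}$ by $\dot H^s$.

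From here the remaining steps mirror the previous corollary exactly. Integrating over $[0,T]$, dividing by $T$ and letting $T\to\infty$, the boundary term $\frac1{2T}(\|\Nx u(T)\|^2-\|\Nx u(0)\|^2)$ vanishes uniformly over $u_0\in\mathcal A$ since $\|\Nx u(t)\|$ is bounded on the attractor (Corollary~\ref{Cor1.smo}); this yields the analogue of~\eqref{2.huge} for $\frac1T\int_0^T\|\Nx u\|^2\,dt$ with $\|g\|^2_{\dot H^s}$ on the right. The optimization over $\varepsilon$ and $\delta$ is then word-for-word the one already performed in Corollary~\ref{Cor2.dim-est}, producing the same constant $\frac{1-s^2}4\bigl(\frac{1+s}{1-s}\bigr)^s$, and combining with the dimension bound~\eqref{2.dim-nu} gives~\eqref{2.dim-attr-s1}. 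Since the computation is a near-verbatim repeat, I do not expect a serious obstacle; the two points genuinely needing care are the justification of the enstrophy equality along complete trajectories (via the $\mathcal H^1$-regularity of the attractor) and the exact cancellation of the $\|\Dx u\|^2$ terms after the choice $\gamma^q/q=1$ — the latter being precisely the structural fact that lets the earlier optimization be reused unchanged. As a consistency check, letting $s\to1$ the constant tends to $\frac1{16\sqrt3}$, recovering the $\dot H^1$ estimate announced in the introduction.
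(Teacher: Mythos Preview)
Your proposal is correct and matches the paper's proof essentially step for step: the paper likewise starts from the enstrophy equality~\eqref{1.strong-en} with $\nu=\alpha=1$, estimates $(g,\Dx u)\le\|g\|_{\dot H^s}\|\Nx u\|^s\|\Dx u\|^{1-s}$ via~\eqref{2.hom-int}, applies the same three-term Young inequality with the same $p,q,\varepsilon,\delta,\gamma$, cancels $\|\Dx u\|^2$ by the choice $\gamma^q/q=1$, and then invokes the strong dissipative estimate~\eqref{1.strong-dis} (which is what underlies your appeal to Corollary~\ref{Cor1.smo}) to kill the boundary term and reuse the optimization verbatim. Your two flagged checkpoints---regularity on the attractor and the exact cancellation of the $\|\Dx u\|^2$ terms---are precisely the points the paper relies on as well.
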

\begin{proof} From the second energy equality \eqref{1.strong-en} with $\nu=\alpha=1$, interpolation inequalities \eqref{2.hom-int} and the Young inequality, we get
\begin{multline*}
\frac12\frac d{dt}\|\Nx u\|^2+\|\Nx u\|^2+\|\Dx u\|^2=
(g,\Dx u)\le\|g\|_{\dot H^{s}}\|\Dx u\|_{\dot H^{-s}}\le\\\le \|g\|_{\dot H^{s}}
\|\Nx u\|^s\|\Dx u\|^{1-s}\le\\\le
\frac{\varepsilon^2}2\|g\|^2_{\dot H^{s}}+\frac{\delta^p}p\|\Nx u\|^2+\frac{\gamma^q}q\|\Dx u\|^2,
\end{multline*}
where the exponents $p,q$ and the constants $\varepsilon,\delta,\gamma$ are {\it exactly} the same as in the proof of the previous corollary.
Thus, using the strong dissipative estimate \eqref{1.strong-dis} and arguing exactly as in the proof of the previous corollary, we end up with
$$
\limsup_{T\to\infty}\frac1T\sup_{u_0\in\mathcal A}\int_0^T\|\Nx u(t)\|^2
\,dt\le \frac{1-s^2}4\(\frac{1+s}{1-s}\)^s\|g\|_{\dot H^{s}}^2
$$
which together with \eqref{2.dim-nu} finishes the proof of the corollary.
\end{proof}
Thus, combining estimates \eqref{2.dim-attr-s} and \eqref{2.dim-attr-s1}, we end up with
\begin{equation}\label{2.dim-attr-s2}
\dim_f\mathcal A\le\frac{1-s^2}{64\sqrt{3}}\(\frac{1+|s|}{1-|s|}\)^{|s|}\|g\|^2_{\dot H^{s}}
\end{equation}
which holds for the case $\nu=\alpha=1$ if $g\in\dot H^{-1}\cap \dot H^1$ and $s\in[-1,1]$.
\par
Finally, we need the analogue of estimate \eqref{2.dim-attr-s2} for general $\nu,\alpha>0$.
We reduce this general case to the particular case
 $\nu=\alpha=1$ by the proper scaling of $t$, $x$ and $u$.
 Indeed, let $u=u(t,x)$ be a solution of \eqref{1.ns-main}
 with arbitrary $\nu,\alpha>0$. Then, taking
$$
t':=\alpha t,\ \ x'=\(\frac\alpha\nu\)^{1/2}x,\ \ \tilde u=\frac1{(\alpha\nu)^{1/2}}u
$$
we see that the function $\tilde u(t',x'):=\frac1{(\alpha\nu)^{1/2}}u(t,x)$
solves equation \eqref{1.ns-main} with $\nu=\alpha=1$ and the
external forces $\tilde g(x')=\alpha^{-1}(\alpha\nu)^{-1/2}g(x)$.
Since the fractal dimension of the attractor does not change under
this scaling, using the obvious scaling properties of the $\dot H^s$-norm:
$$
\|g\(\gamma\cdot\)\|^2_{\dot H^s}=\gamma^{2(s-1)}\|g\|^2_{\dot H^s},
$$
we have proved the following result.

\begin{theorem} Let the assumptions of Theorem \ref{Th1.main}
hold (now with arbitrary positive $\nu$ and $\alpha$) and let,
in addition $g\in \dot H^s$ for some $s\in[-1,1]$. Then, the
fractal dimension of the attractor $\mathcal A$ in $\mathcal H$ satisfies the following estimate:
\begin{equation}\label{2.main-est}
\dim_f\mathcal A\le \frac{1-s^2}{64\sqrt{3}}
\(\frac{1+|s|}{1-|s|}\)^{|s|}\frac1{\alpha^2\nu^2}\(\frac\nu\alpha\)^s\|g\|^2_{\dot H^{s}}.
\end{equation}
\end{theorem}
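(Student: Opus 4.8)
The plan is to reduce the general case to the already-established normalized case $\nu=\alpha=1$, namely estimate \eqref{2.dim-attr-s2}, by exploiting the scaling invariance of \eqref{1.ns-main}. The required change of variables has in fact already been recorded above: given a solution $u(t,x)$ for arbitrary $\nu,\alpha>0$, I set $t'=\alpha t$, $x'=(\alpha/\nu)^{1/2}x$ and $\tilde u(t',x')=(\alpha\nu)^{-1/2}u(t,x)$. First I would substitute these into \eqref{1.ns-main} and check term by term that $\tilde u$ solves the damped Navier--Stokes system with unit viscosity and unit damping and rescaled forcing $\tilde g(x')=\alpha^{-1}(\alpha\nu)^{-1/2}g(x)$; the homogeneity of the advection term $(u,\Nx)u$ and of the viscous term $\nu\Dx u$ under this scaling is precisely what allows both $\nu$ and $\alpha$ to be normalized to $1$ at once.

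The conceptual crux --- and the one step that is not pure bookkeeping --- is that this change of variables is realized at the level of phase space by a \emph{linear} map $\Phi:u(x)\mapsto(\alpha\nu)^{-1/2}u((\nu/\alpha)^{1/2}x')$, and that $\Phi$ conjugates the two solution semigroups up to the time reparametrization $t\mapsto\alpha t$. Hence $\Phi$ carries complete bounded trajectories to complete bounded trajectories, so by the structural representation $\mathcal A=\mathcal K|_{t=0}$ of Proposition~\ref{Prop1.attr} it carries the attractor $\mathcal A$ of the original system onto the attractor $\tilde{\mathcal A}$ of the normalized one, $\tilde{\mathcal A}=\Phi(\mathcal A)$. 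Computing the Jacobian of $x'=(\alpha/\nu)^{1/2}x$ one finds $\|\Phi u\|_{L^2}=\nu^{-1}\|u\|_{L^2}$, so $\Phi$ is a bounded linear isomorphism of $\mathcal H$ onto itself that merely rescales the $L^2$-norm by the fixed constant $\nu^{-1}$. Since the box-counting dimension of Definition~\ref{Def2.dim} is manifestly invariant under any bi-Lipschitz map --- an $\varepsilon$-cover is sent to a $(\nu^{-1}\varepsilon)$-cover, which does not affect the $\limsup$ --- I conclude $\dim_f\mathcal A=\dim_f\tilde{\mathcal A}$. This is the only point requiring care, since fractal dimension is \emph{not} preserved under arbitrary homeomorphisms; here it is the linearity of $\Phi$ that saves us.

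With the dimension identified, I would apply \eqref{2.dim-attr-s2} to $\tilde{\mathcal A}$ and translate the bound back using only the homogeneity of the $\dot H^s$-norm recorded above, $\|g(\gamma\,\cdot)\|^2_{\dot H^s}=\gamma^{2(s-1)}\|g\|^2_{\dot H^s}$. Writing $\tilde g(x')=\alpha^{-1}(\alpha\nu)^{-1/2}g(\gamma x')$ with $\gamma=(\nu/\alpha)^{1/2}$, so that $\gamma^{2(s-1)}=(\nu/\alpha)^{s-1}$, and combining with the amplitude prefactor $\alpha^{-2}(\alpha\nu)^{-1}$ gives
$$
\|\tilde g\|^2_{\dot H^s}=\alpha^{-2}(\alpha\nu)^{-1}\(\frac\nu\alpha\)^{s-1}\|g\|^2_{\dot H^s}
=\frac1{\alpha^2\nu^2}\(\frac\nu\alpha\)^{s}\|g\|^2_{\dot H^s}.
$$
Feeding this into \eqref{2.dim-attr-s2} reproduces exactly the claimed bound \eqref{2.main-est}, with the same universal constant $\frac{1-s^2}{64\sqrt3}\bigl(\frac{1+|s|}{1-|s|}\bigr)^{|s|}$ for every $s\in[-1,1]$. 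I expect no genuine obstacle beyond the invariance argument of the previous paragraph; the rest is elementary exponent arithmetic, which I would sanity-check at the endpoint $s=1$, where the prefactor is read off as the limit $\tfrac1{16\sqrt3}$ and the bound collapses to $\tfrac1{16\sqrt3}\,\|\rot g\|^2/(\nu\alpha^3)$, in agreement with the second estimate in \eqref{min}.
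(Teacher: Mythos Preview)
Your proposal is correct and follows essentially the same route as the paper: reduce to the normalized case $\nu=\alpha=1$ via the scaling $t'=\alpha t$, $x'=(\alpha/\nu)^{1/2}x$, $\tilde u=(\alpha\nu)^{-1/2}u$, invoke invariance of the fractal dimension under this linear rescaling, and translate \eqref{2.dim-attr-s2} back using the homogeneity identity $\|\tilde g\|^2_{\dot H^s}=\alpha^{-2}\nu^{-2}(\nu/\alpha)^s\|g\|^2_{\dot H^s}$. The paper simply asserts that the fractal dimension ``does not change under this scaling,'' whereas you supply the bi-Lipschitz justification explicitly; otherwise the arguments are identical.
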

\begin{proof} Indeed, the above estimate follows from \eqref{2.dim-attr-s2} and the identity
$$
\|\tilde g\|^2_{\dot H^s}=\frac1{\alpha^2\nu^2}\(\frac\nu\alpha\)^s\|g\|^2_{\dot H^s}.
$$
\end{proof}

If $g\in \dot H^{-1}\cap\dot H^1$, then the rate of growth of the
estimate~\eqref{2.main-est} with respect to $\nu$ as $\nu\to0$ is
the smallest when $s=1$. In this case we have

\begin{corollary}\label{Cor:s=1} Suppose that
$g\in \dot H^1$. Then the
fractal dimension of the attractor $\mathcal A$ satisfies
\begin{equation}\label{2.main-est-s=1}
\dim_f\mathcal A\le \frac{1}{16\sqrt{3}}
\frac{\|\rot g\|^2}{\alpha^3\nu}.
\end{equation}
\end{corollary}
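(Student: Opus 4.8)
The plan is to specialize the general bound \eqref{2.main-est} to the endpoint $s=1$. The power of $\nu/\alpha$ is immediate: at $s=1$ the prefactor $\frac1{\alpha^2\nu^2}\(\frac\nu\alpha\)^s$ becomes $\frac1{\alpha^2\nu^2}\cdot\frac\nu\alpha=\frac1{\alpha^3\nu}$, which already produces the $\alpha^{-3}\nu^{-1}$ scaling claimed in \eqref{2.main-est-s=1}. The only genuine work is to make sense of the numerical constant $\frac{1-s^2}{64\sqrt3}\bigl(\frac{1+s}{1-s}\bigr)^s$, which at $s=1$ is of the indeterminate form $0\cdot\infty$, and to rewrite the $\dot H^1$-norm of $g$ in terms of $\rot g$.

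For the constant I would regroup the two offending factors so that the zero and the blow-up cancel explicitly. Writing
$$
(1-s^2)\Bigl(\tfrac{1+s}{1-s}\Bigr)^{s}=(1-s)(1+s)\,\frac{(1+s)^s}{(1-s)^s}=(1-s)^{1-s}(1+s)^{1+s},
$$
the indeterminacy is isolated in $(1-s)^{1-s}$. Using the elementary limit $\lim_{\varepsilon\to0^+}\varepsilon^{\varepsilon}=1$ (take logarithms: $\varepsilon\ln\varepsilon\to0$) with $\varepsilon=1-s$, I get $(1-s)^{1-s}\to1$ and $(1+s)^{1+s}\to2^2=4$ as $s\to1^-$. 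Hence the bracketed constant in \eqref{2.main-est} tends to $\frac{4}{64\sqrt3}=\frac1{16\sqrt3}$, exactly the constant in \eqref{2.main-est-s=1}. For the norm, since $g$ is divergence free I would invoke the two-dimensional identity $\|\Nx g\|^2=\|\rot g\|^2+\|\divv g\|^2=\|\rot g\|^2$, valid for decaying fields because the cross term $\int\bigl(\partial_{x_1}g^1\,\partial_{x_2}g^2-\partial_{x_1}g^2\,\partial_{x_2}g^1\bigr)\,dx$ integrates to zero after integration by parts; together with $\|g\|^2_{\dot H^1}=\|\Nx g\|^2$ from \eqref{2.hom-sp} this converts the bound into $\frac1{16\sqrt3}\frac{\|\rot g\|^2}{\alpha^3\nu}$.

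The point that needs care — and which I regard as the main obstacle — is that I cannot simply feed a sequence $s\uparrow1$ into \eqref{2.dim-attr-s1}, because the hypothesis $g\in\dot H^1$ does \emph{not} guarantee $g\in\dot H^{s}$ for $s<1$: the homogeneous spaces are not nested, their low-frequency behaviour being worse for smaller $s$. I would therefore either interpret \eqref{2.main-est} as genuinely valid at the endpoint $s=1$ with its constant defined by the limit just computed, or, to be safe, re-run the proof of Corollary \ref{Cor2.dim-est1} directly at $s=1$. The latter is short: starting from the strong energy equality \eqref{1.strong-en}, estimate
$$
(g,\Dx u)\le\|g\|_{\dot H^1}\|\Dx u\|_{\dot H^{-1}}=\|g\|_{\dot H^1}\|\Nx u\|,
$$
apply Young's inequality with a single parameter $\varepsilon$, discard the nonnegative $\|\Dx u\|^2$ term, and integrate; the boundary term vanishes as $T\to\infty$ by the dissipative estimate \eqref{1.strong-dis}. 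Minimizing the resulting coefficient $\varepsilon^4/(2\varepsilon^2-1)$ at $\varepsilon^2=1$ gives $\limsup_{T\to\infty}\frac1T\int_0^T\|\Nx u\|^2\,dt\le\|g\|^2_{\dot H^1}$, which with \eqref{2.dim-nu} reproduces the constant $\frac1{16\sqrt3}$ in the case $\nu=\alpha=1$. The scaling argument preceding the theorem then upgrades this to general $\nu,\alpha>0$, completing the proof.
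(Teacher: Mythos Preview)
Your argument is correct and follows the same route as the paper, but with more care. The paper's own proof is a single line: it applies \eqref{2.main-est} at $s=1$ and uses the identity $\|g\|_{\dot H^1}^2=\|\Nx g\|^2=\|\divv g\|^2+\|\rot g\|^2=\|\rot g\|^2$ for divergence-free $g$. It does not explicitly address the indeterminate form in the constant, tacitly treating $\frac{1-s^2}{64\sqrt3}\bigl(\frac{1+s}{1-s}\bigr)^s\big|_{s=1}$ as its limiting value $\tfrac{1}{16\sqrt3}$.

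Your additional observation --- that the proof of Corollary~\ref{Cor2.dim-est1} as written uses exponents $p=2/s$ and $q=2/(1-s)$ which degenerate at $s=1$, and that one therefore ought to either interpret the constant as a limit or rerun the argument directly at $s=1$ --- is a legitimate point the paper glosses over. Your direct computation at $s=1$ (two-term Young inequality, optimization of $\varepsilon^4/(2\varepsilon^2-1)$ at $\varepsilon^2=1$) is correct and fills this gap. The concern about non-nesting of the $\dot H^s$ spaces is also well taken: this is exactly why the endpoint needs its own justification rather than a limiting argument through $s<1$.
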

\begin{proof}
Since  $\divv g=0$, it follows that
$$
\|g\|^2_{\dot H^1}=\|\Nx g\|^2=\|\divv g\|^2+\|\rot g\|^2=
\|\rot g\|^2.
$$
\end{proof}

\subsubsection*{\bf{Acknowledgements}}\label{SS:Acknow}
This work was done with the financial support
from the Russian Science Foundation (grant no. 14-21-00025).

\end{document}